\def\ba{\begin{align*}}
\def\ea{\end{align*}}
\def\nn{\nonumber}
\def\ra{\rightarrow}
\def\bc{\begin{center}}
\def\ec{\end{center}}
\def\bi{\begin{itemize}}
\def\ei{\end{itemize}}
\def\bn{\begin{enumerate}}
\def\en{\end{enumerate}}
\def\bmp{\begin{minipage}}
\def\emp{\end{minipage}}
\def\h{\hspace{0.25cm}}
\def\p{\partial}
\def\real{\mathbb{R}}
\def\Jay{\mathbb{J}}
\newcommand{\tr}{\textrm{tr}}
\DeclareMathOperator{\diag}{diag}
\DeclareMathOperator{\Sym}{Sym}
\DeclareMathOperator{\ind}{ind}
\DeclareMathOperator{\linspan}{span}
\newtheorem{thm}{Theorem}
\newtheorem{prop}{Proposition}[section]
\newtheorem{lemma}[prop]{Lemma}
\newtheorem{ass}{Assumption}
\newtheorem{cnd}{Condition}
\newtheorem{defn}[prop]{Definition}
\newtheorem{rem}[prop]{Remark}
\newcommand{\executeiffilenewer}[3]{%
\ifnum\pdfstrcmp{\pdffilemoddate{#1}}%
{\pdffilemoddate{#2}}>0%
{\immediate\write18{#3}}\fi%
}
\newcommand{%
\executeiffilenewer{.svg}{.pdf}%
{inkscape -z -D --file=.svg %
--export-pdf=.pdf --export-latex}%
\input{.pdf_tex}%
}[1]{%
\executeiffilenewer{#1.svg}{#1.pdf}%
{inkscape -z -D --file=#1.svg %
--export-pdf=#1.pdf --export-latex}%
\input{#1.pdf_tex}%
}
\title{On homoclinic orbits to center manifolds of elliptic-hyperbolic equilibria in Hamiltonian systems}
\author{W. Giles, J.S.W. Lamb, D. Turaev}
\begin{document}

\maketitle

\begin{abstract}
We consider a Hamiltonian system which has an elliptic-hyperbolic equilibrium with a homoclinic loop. We identify the set of orbits which are homoclinic 
to the center manifold of the equilibrium via a Lyapunov-Schmidt reduction procedure. This leads to the study of a singularity which inherits certain
structure from the Hamiltonian nature of the system. Under non-degeneracy assumptions, we classify the possible Morse indices of this singularity, permitting a local description of the set of homoclinic orbits. We also consider the case of time-reversible Hamiltonian systems.
\end{abstract}


\section{Introduction}\label{Section:Intro}

\subsection{Outline}
In this article we investigate the intersection of center-stable and center-unstable manifolds of a nonhyperbolic equilibrium of a Hamiltoian system near
a homoclinic loop. Orbits lying in this intersection converge to orbits in the center manifold in both positive and negative time. In other terminology, we investigate
a question of the structure of the set of bounded solutions which are uniformly close to a given localized (decaying to zero) solution of a Hamiltonian system of ordinary
differential equations.

We locate the intersections of the center-stable and center-unstable manifolds (corresponding to these bounded solutions) 
by deriving a real valued function whose zeros correspond to the points of intersection. This function has a critical point at the origin, 
and in order to apply basic singularity theory to analyse the zero set close to the critical point, we study the eigenvalues of the Hessian matrix at this point.
This matrix inherits structure from the Hamiltonian character of the system, meaning that the spectrum of its eigenvalues is not arbitrary. The structure of 
the spectrum is investigated using the `scattering matrix' of the linearised variational equation
along the homoclinic to the equilibrium, an approach also employed in \cite{Yagasaki2000,Yagasaki}. 
A similar object appeared originally in \cite{Lerman1991,Lerman1996}. \\

The example of lowest dimension for an elliptic-hyperbolic equilibrium is the saddle-center in a two degree of freedom system. 
Here a neighbourhood of the equilibium in the center manifold is filled with a Lyapunov family of periodic orbits, parameterised 
by the value of the Hamiltonian. Lerman \cite{Lerman1991} proved the generic existence of 4 transverse homoclinics to each periodic orbit sufficiently 
close to the equilibrium (see also Grotta-Ragazzo \cite{Grotta-Ragazzo1997},  Yagasaki \cite{Yagasaki2000}), implying the existence of 
complex dynamics in each of these energy levels. This result was generalised for systems with any number of hyperbolic degrees of 
freedom in \cite{Lerman1996}. Multi-round homoclinics are also found to emerge as the system is perturbed
\cite{Mielke1992,Koltsova1995,Champneys2000}. For higher dimensions of center manifolds, 
homoclinics to invariant tori in small perturbations of completely integrable Hamiltonian systems have been found in \cite{Koltsova2005} and \cite{Delshams2010}.

In the completely integrable case, the existence of many conserved quantities usually forces 
intersections of invariant manifolds to be of higher dimension than in the general case. 
The most commonly employed method to measure the splitting of these intersections under 
perturbation is the so-called Melnikov method (for an account in the near-integrable case, see the book \cite{Wiggins1988} 
and references therein). However, the Melnikov approach does not require such additional geometric structure, 
and can be applied in general systems. The method in this paper is also a variant of Melnikov's. Since we make 
no assumption of near-integrability, the geometry of the problem is less restricted - the Lyapunov-Schmidt approach 
to Melnikov theory employed in this paper has most in common with the papers by Gruendler \cite{Gruendler1992} who studied loops 
to hyperbolic equilibria in general systems, Palmer \cite{Palmer1984} who considered periodic forcing (see also \cite{Battelli1990}), 
and latterly Yagasaki \cite{Yagasakia}, who studied periodic perturbations of Hamiltonian systems with elliptic-hyperbolic equilibria, 
whose invariant manifolds may intersect in a degenerate manner.

In \cite{Yagasaki}, Yagasaki derived the same quadratic form studied here for the case of one hyperbolic degree of freedom, 
under additional hypotheses on the homoclinic loop, by another variant of the Melnikov method. The focus in \cite{Yagasaki} 
is on the existence of heteroclinic chains between invariant tori in the center manifold. 
The results in our paper provide less detailed information about dynamical behaviour than some of those mentioned in this 
introduction, but they may provide a first step towards a more systematic approach; the knowledge of the possible structure 
present in our reduced function at the linear level could be extended to develop normal forms for problems of this type.

The organisation of this paper is as follows. In the remainder of section \ref{Section:Intro} 
we describe the set up and our assumptions, and outline the Lyapunov-Schmidt reduction. Section \ref{Section:Weighted} establishes the necessary 
results for the reduction, and begins the study of the Hessian matrix. In section \ref{scatteringmatrix} we introduce the scattering matrix, 
and derive the formula for the Hessian matrix featuring in theorem \ref{LSreductionthm}. We also prove the first part of theorem \ref{mainthm}, which
states that the Hessian matrix cannot be positive- or negative- definite. We then prove in section \ref{Section:Nearidentity}, that any symplectic 
matrix which is sufficiently close to the identity can be realised as the scattering matrix of a system which 
satisfies our assumptions and use this result in section \ref{Section:Allindef} to demonstrate
that the Hessian matrix can have any indefinite signature, using a theorem from \cite{Mirsky1958}. We then consider in section \ref{specialcases} the case
in which the system is time-reversible, as is common in examples coming from classical mechanics.

\subsection{Problem setting}
The system is defined by the ordinary differential equations
\begin{equation} \dot{u} = X_{H}(u). \label{Hamsystem} \end{equation}
on $\real^{2n}$. The right hand side of (\ref{Hamsystem}) is the Hamiltonian vector field associated with the Hamiltonian function
$H: \real^{2n} \ra \real$, which we require to be at least $C^3$. Defining the standard symplectic matrix
\[ \Jay = \left( \begin{array}{cc}  0  & I \\ -I & 0  \end{array}\right),  \]
we can write $X_H(u) = \Jay \nabla H(u)$. We also use $\Jay$ to define the standard symplectic form
\[ \omega(\cdot,\cdot) = \left<\cdot,\Jay\cdot\right> \]
and observe that $\omega(X_H(u), \cdot) = dH(u)(\cdot)$.

We assume that the origin is an elliptic-hyperbolic equlibrium of system (\ref{Hamsystem}), that is;
\begin{ass}\label{specass}
The spectrum of the linearisation $DX_H(0)$ consists of $2l$ distinct eigenvalues with zero real part, 
$\pm i\omega_j$, $j \in \{1,...,l\}$, and $2(n - l)$ eigenvalues $\lambda_i$,  whose real parts are bounded away from 
zero; $0<\alpha<|{\rm Re} \lambda_i|$ $i\in\{1,..,2(n-l)\}$.
\end{ass}
The equilibrium possesses $(n-l)$-dimensional stable and unstable manifolds $W^s$ and $W^u$, which are assumed 
to intersect along a homoclinic loop $\gamma(t)$, namely;
\begin{ass}\label{homoass}
There exists an orbit $\Gamma = \{ \gamma(t) : t \in \real\}$ such that $ \Gamma \subset W^s\cap W^u$.
\end{ass}

We denote by $E^{u}(DX_H(0))$, $E^{s}(DX_H (0))$ the unstable and stable eigenspaces of the linearisation at the origin. The centre subspace, 
corresponding to the purely imaginary eigenvalues, which is symplectic, will be denoted $E^{c}(DX_H (0))$, or simply $E^c$ when 
the context is clear. Under these assumptions, the equilibrium possesses a $2l$-dimensional center manifold, which is symplectic. The center manifold may not 
be unique, but any center manifold will be tangent at the origin to $E^c(DX_H(0))$, yielding the same linearisation, and the same result in our context. 
The restriction of $H$ to the center manifold defines a Hamiltonian system with $l$ degrees of freedom and an elliptic critical point at the origin. Writing 
the tangent space at the equilibrium according to the symplectic splitting (see  \cite{Mielke1991});
\[ \real^{2n} = E^c \oplus (E^u \oplus E^s) \]
we have
\[ \Jay = \begin{pmatrix} \Jay_1 & 0 \\ 0 & \Jay_2 \end{pmatrix}. \]
Choosing a symplectic basis on $E^c$ such that  $\Jay_1 = \left( \begin{smallmatrix} 0 & I \\ -I & 0 \end{smallmatrix} \right)$, since $\Jay_1 D^2H(0)|_{E^c}$ has 
distinct purely imaginary eigenvalues, we can (and do) make a symplectic change of coordinates in $E^c$ which brings $D^2H(0)|_{E^c}$ to the form
\[ D^2 H(0)|_{E^{c}} =\left( \begin{smallmatrix} \omega_1 & & & & & \\
                                                                                               &  \ddots & & & & \\
                                                                                                 & & \omega_l & & & \\
                                                                                           & & & \omega_1 & & \\
                                                                                           & & & & \ddots &   \\
                                                                                           & & & & & \omega_l \end{smallmatrix} \right) 
                                                                                           =: \diag (\omega_1,...,\omega_l,\omega_1,...\omega_l). \]



The origin also possesses $(n+l)$-dimensional center-stable and center-unstable manifolds $W^{cs}$ and $W^{cu}$. The orbits we seek, which converge to the center manifold in forward and backward time, are contained in the intersection $W^{cs}\cap W^{cu}$.We make the following assumption on the invariant manifolds;
\begin{ass} $\dim( T_{\gamma(0)}W^{cu} \cap T_{\gamma(0)}W^s) = \dim(T_{\gamma(0)}W^{cs} \cap T_{\gamma(0)}W^u) = 1$. \label{transass} \end{ass}
Of course, the existence of the homoclinic $\gamma(t)$ which is contained in the intersection of $W^u$ and $W^s$ implies that the dimension of the intersection in
assumption \ref{transass} is at least one, so this assumption means that this dimension is minimal: there is no further degeneracy leading to a higher-dimensional intersection.

\subsection{Statement of results}

\begin{thm} \label{LSreductionthm}
 Under the assumptions \ref{specass}, \ref {homoass} and \ref{transass}, homoclinic orbits to the center manifold of the origin correspond to zeros of a function
$\mathfrak{g} : \real^{2l} \ra \real$, which has the property that $\nabla \mathfrak{g}(0) = 0$, and its Hessian is given by
\[  D^2\mathfrak{g} =  \sigma^T D^2 H(0)|_{E^c} \sigma - D^2 H(0)|_{E^c}\]
where $\sigma$ is the symplectic scattering matrix\footnote{see the definition of the scattering
matrix in section \ref{scatteringmatrix}} determined by the flow linearized about the homoclinic loop $\Gamma$.
\end{thm}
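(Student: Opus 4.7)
The plan is a Lyapunov--Schmidt reduction on a function space of perturbed trajectories, exploiting the Hamiltonian structure to identify the reduced function with an action-type functional whose second variation is computed via the scattering matrix.

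First, I would set up a Banach space of curves of the form $u(t) = \gamma(t) + v(t) + \chi(t)\, z(\xi)$ where $v$ lies in an exponentially weighted space (decay rate $0 < \mu < \alpha$), $\xi \in \real^{2l}$ parametrises the asymptotic centre-manifold data, and $\chi$ is a cutoff depending on the sign of $t$ that inserts the correct centre-manifold asymptote at $\pm\infty$. The equation $\dot{u} = X_H(u)$ then becomes a smooth map between Banach spaces. Its linearisation $L v = \dot v - DX_H(\gamma)\, v$ on the weighted space is Fredholm; by assumption \ref{transass}, $\ker L = \linspan\{\dot\gamma\}$ and, by symplectic duality, $\mathrm{coker}\, L$ is also one-dimensional. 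The bifurcation equation is therefore scalar, and solving the complementary range equation by the implicit function theorem (the content of section \ref{Section:Weighted}) produces a single scalar function $\mathfrak{g}(\xi)$ on $\real^{2l}$ whose zeros are in bijection with orbits of $W^{cs} \cap W^{cu}$ near $\Gamma$.

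Next, I would use the Hamiltonian structure to recognise $\mathfrak{g}$ as gradient-like. Since $X_H = \Jay \nabla H$, the reduced problem inherits a variational principle from the Hamilton action: critical points of the action are orbits, and the reduction preserves this, so zeros of $\mathfrak{g}$ are critical points. The value $\xi = 0$ already yields the homoclinic $\gamma$, giving $\nabla \mathfrak{g}(0) = 0$ immediately.

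The heart of the proof is the Hessian computation. Differentiating $\mathfrak{g}$ twice in directions $\xi_1, \xi_2 \in \real^{2l}$ and using the linearised equation to eliminate interior contributions, one is left with boundary terms at $t = \pm\infty$. The scattering matrix $\sigma$ is precisely the symplectic map that propagates centre-subspace displacements from $-\infty$ to $+\infty$ along the linearised flow about $\Gamma$. Consequently the boundary contribution at $+\infty$ takes the form $\langle \sigma\xi_1,\, D^2H(0)|_{E^c}\, \sigma\xi_2\rangle$, while that at $-\infty$ is $\langle \xi_1,\, D^2H(0)|_{E^c}\, \xi_2\rangle$. The difference delivers the stated formula.

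The main obstacle will be showing that only the centre-subspace boundary terms survive: contributions arising from hyperbolic components of the linearised displacement must be absorbed by the Fredholm correction $v$, leaving no residual terms involving $D^2H$ restricted to $E^u \oplus E^s$. This requires careful integration by parts in the weighted norms and a clean identification of the scattering matrix from the asymptotic behaviour of the variational equation, which is exactly what sections \ref{Section:Weighted} and \ref{scatteringmatrix} are designed to accomplish.
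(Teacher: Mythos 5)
Your overall architecture (Lyapunov--Schmidt on a weighted space, scattering matrix delivering the Hessian as a difference of boundary terms) is the right one, and your sketch of the Hessian computation agrees with the paper's: one writes $\tfrac{\partial^2\mathfrak{g}}{\partial\beta_i\partial\beta_j}(0)=\int_{\real}\tfrac{d}{dt}\langle k_i(t),D^2H(\gamma(t))k_j(t)\rangle\,dt$ and evaluates the boundary terms at $\pm\infty$ through the asymptotics $k_i(t)\to\Psi(t)k_{i,\pm\infty}$ and $k_{i,+\infty}=\sigma k_{i,-\infty}$. But there is a genuine gap in your Fredholm bookkeeping, and it sits exactly where the theorem's content lies. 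On a space of corrections $v$ that \emph{decay} like $e^{-\mu|t|}$, it is true that $\ker L=\linspan\{\dot\gamma\}$, but it is false that $\mathrm{coker}\,L$ is one-dimensional. The annihilator of $\mathcal{R}(L)$ consists of solutions $\psi$ of the adjoint equation for which the boundary terms $\langle\psi(t),v(t)\rangle$ vanish as $t\to\pm\infty$; since $v$ decays, every \emph{bounded} adjoint solution qualifies, and by the symplectic correspondence $\psi=\Jay x$ these form a $(2l+1)$-dimensional space ($\Jay\dot\gamma$ together with the $\Jay$-images of the $2l$ solutions asymptotic to the centre subspace). So on the decaying space $L$ has index $-2l$, not $0$. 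Adding your $2l$ external parameters $\xi$ then yields a full linearisation of index $0$: after the range equation is solved you are left with either a map $\real^{2l+1}\to\real^{2l+1}$ or, if the $\xi$'s are used to kill cokernel directions, a scalar function of the single time-translation variable --- in neither case a scalar function on $\real^{2l}$. Relatedly, a single $\xi\in\real^{2l}$ prescribes the \emph{same} centre asymptote at both ends of the orbit, whereas a general orbit homoclinic to $W^c$ limits onto two independent centre orbits (related only at the linearised level by $\sigma\neq I$), so your ansatz misses most of the solutions you are trying to count.

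The paper resolves precisely this point in Lemma \ref{Fredholmlemma}: one works on $C^1_{-\beta}$, which admits the bounded centre-asymptotic behaviour, conjugates by $e^{-\delta\phi(t)}$ so that the limit matrices become hyperbolic (centre eigenvalues pushed to opposite sides of the imaginary axis at $t\to\pm\infty$), and applies Palmer's theorem to get index $+2l$, hence $\dim\ker L=2l+1$ and $\mathrm{coker}\,L=\linspan\{e^{2\delta\phi}\nabla H(\gamma)\}$. The cokernel is one-dimensional there because against functions allowed to grow like $e^{\beta|t|}$ only the \emph{exponentially decaying} adjoint solution (unique up to scale by Assumption \ref{transass}) gives vanishing boundary terms. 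The $\real^{2l}$ domain of $\mathfrak{g}$ then appears as $\ker L$ modulo the time-translation direction $\dot\gamma$, which the paper removes by an implicit-function-theorem phase condition. Finally, your justification of $\nabla\mathfrak{g}(0)=0$ ("$\xi=0$ already yields the homoclinic") only gives $\mathfrak{g}(0)=0$; the vanishing of the gradient is instead automatic from the reduction, since $D\mathcal{G}(0)=(I-P)L(I+D_kw(0))=0$ because $L$ maps into $\mathcal{R}(L)=\ker(I-P)$. No variational principle is needed for this step.
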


The zeros of the function $\mathfrak{g}$ correspond to intersections of $W^{cs}$ and $W^{cu}$. These manifolds are foliated by the strong-stable and, resp.,
strong-unstable leaves of the points in $W^c$: if a forward orbit of a point $M\in W^c$ stays in a small neighbourhood of the equilibrium at the origin, then
its strong-stable leaf $l^{ss}(M)$ consists of all points whose forward orbits tend to the forward orbit of $M$ exponentially with a rate at least $e^{-\alpha t}$,
the same for the strong-unstable leaf $l^{uu}(M)$ and backward orbits.
We prove in Theorem \ref{LSreductionthm} that $\mathfrak{g}(M)=0$ if and only if there exists a point $\bar M\in W^c$ such that $l^{uu}(M)$ has a point of intersection
with $l^{ss}(\bar M)$, and the orbit of this intersection point is close to the homoclinic loop $\Gamma$ when it goes from a small neighbourhood of $M$ to a small neighbourhood of $\bar M$. This orbit is homoclinic to $W^c$ (and corresponds to a solution of system (\ref{Hamsystem}) which is bounded and uniformly close to $\gamma(t)$
 for all $t$) if both the backward orbit of $M$ and the forward orbit of $\bar M$ are bounded and stay close to the origin.

Note that an equivalent quadratic form is derived in \cite{Yagasaki} in the case of one hyperbolic degree of freedom (i.e $n = l+1$ in our notation), under additional, and quite strong assumption that the homoclinic loop is contained in a normally elliptic invariant manifold. There, under the extra (mild) hypotheses necessary for KAM type results to yield the existence of a family of invariant tori in the center manifold, Yagasaki proves that when expressed in polar coordinates, a zero of the quadratic form at which the radial derivatives are nonzero corresponds to a transversal intersection of invariant manifolds of two invariant KAM-tori. The existence of chains of heteroclinic orbits shadowed by real `diffusing' orbits is then shown for some examples. It would seem that the methods from the current paper combined with those from \cite{Yagasaki} allow one to prove the existence of chains and accompanying diffusion behaviour in a much larger class of far-from-integrable Hamiltonian systems. \\

By Morse lemma, if the quadratic part of $\mathfrak{g}$ is non-degenerate, the structure of the zero set of $\mathfrak{g}$ is determined by the
signature of $D^2\mathfrak{g}$. The following theorem, the main result of this paper, describes the possible signatures of $D^2\mathfrak{g}$
compatible with the Hamiltonian structure of the system.
\begin{thm} \label{mainthm}
Under assumptions  \ref{specass}, \ref {homoass}, \ref{transass},
\begin{enumerate}[(i)]
\item $D^2\mathfrak{g}$ can be neither positive nor negative definite.
\item All indefinite signatures for $D^2\mathfrak{g}$ can be realised by systems satisfying the assumptions. 
Furthermore, they can be realised in systems which are a small perturbation of a completely integrable system.
\end{enumerate}
\end{thm}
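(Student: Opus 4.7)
The plan is to treat the two claims separately. For part (i), I would split into cases according to the signature of $A := D^2 H(0)|_{E^c}$. If $A$ is definite, the result follows from $\det(\sigma^T A \sigma) = (\det\sigma)^2 \det A = \det A$ together with the strict monotonicity of $\det$ on the positive definite cone: when $A\succ 0$, the inequality $\sigma^T A \sigma \succ A \succ 0$ would force $\det(\sigma^T A \sigma) > \det A$, a contradiction; the negative definite case is symmetric. When $A$ is indefinite the matter is more delicate. Positivity of the Hessian is equivalent to strict monotonicity $Q(\sigma v) > Q(v)$ of the quadratic form $Q(v):= v^T A v$ under iteration of $\sigma$. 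If $\sigma$ has any eigenvalue on the unit circle there is an invariant subspace carrying a bounded orbit whose norm does not decay to zero, and strict monotonicity forces $Q(\sigma^k v)$ to grow without bound along it, contradicting the boundedness of $Q$ on bounded sets. In the remaining purely hyperbolic case the stable and unstable subspaces $E^{s}_\sigma, E^{u}_\sigma$ are Lagrangian of dimension $l$; combining monotonicity of $Q$ with $\sigma^{\pm k}v \to 0$ yields $A|_{E^u_\sigma}\succ 0$ and $A|_{E^s_\sigma}\prec 0$. Parametrising Lagrangians by symmetric matrices via $L(S) = \{(y, Sy):y\in\real^l\}$ and using the block structure $A = \diag(\Omega,\Omega)$, the restriction takes the form $A|_{L(S)} = \Omega + S\Omega S$; an explicit analysis of the determinant shows that for indefinite $\Omega$ the conditions $\Omega+S\Omega S\succ 0$ and $\Omega+S'\Omega S'\prec 0$ cannot be simultaneously solvable in symmetric $S,S'$, giving the required contradiction.

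For part (ii), the realisation theorem of Section \ref{Section:Nearidentity} reduces the task to exhibiting, for each indefinite signature $(r,s)$ with $r+s = 2l$ and $r,s\geq 1$, a symplectic $\sigma$ close to the identity such that $D^2\mathfrak{g}$ has this signature. Writing $\sigma = I + \epsilon \Jay S + O(\epsilon^2)$ with $S$ symmetric, a direct expansion gives
\[
D^2\mathfrak{g} = \epsilon(A\Jay S - S\Jay A) + O(\epsilon^2) = \epsilon\,[M,S] + O(\epsilon^2), \qquad M := \Jay A.
\]
The leading term $[M,S]$ is symmetric with trace zero, hence necessarily indefinite (or zero). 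Mirsky's theorem from \cite{Mirsky1958} then provides the key ingredient: as $S$ ranges over symmetric matrices, the commutator $[M,S]$ realises every indefinite signature. For $\epsilon$ sufficiently small $D^2\mathfrak{g}$ inherits the signature of its leading term, and the realisation theorem produces a corresponding Hamiltonian system. The claim about perturbations of completely integrable systems follows because the realisation construction in Section \ref{Section:Nearidentity} is itself a perturbative argument that can be set up with an integrable base.

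The most delicate step in this plan is the contradiction in the purely hyperbolic case of (i): showing that an $A$-positive and an $A$-negative Lagrangian cannot coexist when $\Omega$ is indefinite. The direct computation in the chart $L(S)$ of the Lagrangian Grassmannian settles this, but a more conceptual proof should be possible using the pseudo-Kähler structure determined by the commuting pair $(\Jay, A)$ on $E^c$.
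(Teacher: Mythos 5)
Your part (i) takes a genuinely different route from the paper, and one piece of it is a nice elementary observation: when $A:=D^2H(0)|_{E^c}$ is definite, $\det(\sigma^T A\sigma)=\det A$ together with strict monotonicity of the determinant on the definite cone kills both signs of the Hessian with no symplectic topology at all. But the indefinite-$A$ case, which is the generic one, has a genuine gap at its crux. First, a minor repair: in the unit-circle case, strict monotonicity $Q(\sigma v)>Q(v)$ alone gives a strictly increasing sequence $Q(\sigma^k v)$, which on a bounded orbit is merely convergent; you need the uniform coercivity $Q(\sigma v)-Q(v)\geq c\|v\|^2$ together with $\inf_k\|\sigma^k v\|>0$ to force divergence. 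The serious problem is the hyperbolic case. You correctly derive that $E^u_\sigma$ would have to be an $A$-positive Lagrangian and $E^s_\sigma$ an $A$-negative Lagrangian, but the claim that these cannot coexist for indefinite $A=\diag(\Omega,\Omega)$ is asserted, not proved, and it is not a routine determinant computation. For $l=2$ one can indeed show $\det(\Omega+S\Omega S)>0$ forces a definiteness type determined by which of $|\omega_1|,|\omega_2|$ is larger, so the claim holds there; but for $l>2$ the sign of $\det(\Omega+S\Omega S)$ does not determine the signature, your chart $L(S)$ misses Lagrangians not transverse to the $p$-plane, and the statement you need is itself a linear symplectic rigidity result of essentially the same depth as the input the paper uses. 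The paper avoids all of this: it applies the minimax principle to the extreme eigenvalue of $\sigma^T A\sigma$ and the linear nonsqueezing theorem to the two-dimensional symplectic eigenspace of the extreme eigenvalue of $A$, obtaining a contradiction with no case analysis on the spectrum of $\sigma$ and no Lagrangian coexistence lemma. If you want to keep your dynamical argument, you must either prove the non-coexistence statement (e.g.\ by reducing it to nonsqueezing or to a Krein-signature count) or replace it by the paper's minimax argument.

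Part (ii) follows the paper's skeleton (reduce to near-identity $\sigma=\exp(-\varepsilon\Jay B)$ via the realisation result, expand, apply Mirsky), but the two steps that carry all the content are missing. Mirsky's theorem converts a prescribed diagonal into prescribed eigenvalues; to use it you must first identify which symmetric matrices arise as $A\Jay S-S\Jay A$. The paper's Lemma \ref{Rangeofchi} shows (under the nonresonance condition $\omega_i^2\neq\omega_j^2$, which you should state) that the range is exactly the set of symmetric matrices with diagonal of the form $(g_1,\ldots,g_l,-g_1,\ldots,-g_l)$; without this, ``the commutator realises every indefinite signature'' is not a deduction from Mirsky but a restatement of what is to be proved. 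Second, one must actually exhibit, for each $m\in\{1,\ldots,2l-1\}$, a vector $b$ with $m$ positive and $2l-m$ negative entries majorising $(1,\ldots,1,-1,\ldots,-1)$; the paper does this by an explicit construction and a case check of the partial-sum inequalities. Your outline is correct in shape, but as written it defers precisely the parts of the argument that require work.
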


The first part of the theorem says that as long as the critical point of $\mathfrak{g}$ is Morse, the homoclinic $\gamma(t)$ is never 
an isolated intersection point of the center stable and center unstable manifolds - a situation which in the general (non-Hamiltonian) case 
could arise. In the case $l=1$, we find agreement with a result from \cite{Lerman1996}; the existence of one positive and 
one negative eigenvalue leads to a degenerate hyperpola (a `cross') for the zero set of $\mathfrak{g}$, which intersects each sufficiently small periodic 
orbit surrounding the origin in 4 places, leading to 4 homoclinics. The rest of the theorem says that in general there is no 
further restriction on the singularity.

In section \ref{specialcases} we will consider also the case in which the vector field is reversible;

\begin{ass} \label{Reversibility} Letting $R$ be a linear involution which acts antisymplectically, that is $R^2 = I$ and $R\Jay = -\Jay R$,
\begin{enumerate}[(i)]
\item $X_H$ is R-reversible: $ X_H(Ru) = -RX_H(u).$
\item The homoclinic $\gamma(t)$ is $R-symmetric$: writing $\Gamma = \{ \gamma(t) : t \in \real \}$, we have $ R\Gamma  = \Gamma. $
\end{enumerate}
\end{ass}

In this case we find;

\begin{thm} \label{specialcasesthm}
Under assumptions \ref{specass}, \ref {homoass}, \ref{transass}, \ref{Reversibility}, the signature of $D^2\mathfrak{g}$ is $(l,l)$.
\end{thm}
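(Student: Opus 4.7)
The plan is to derive two symmetry relations from assumption \ref{Reversibility}---one for $A := D^2 H(0)|_{E^c}$ and one for the scattering matrix $\sigma$---and combine them to show that $D^2 \mathfrak{g}$ is congruent to $-D^2 \mathfrak{g}$. Sylvester's law of inertia will then force the numbers of positive and negative eigenvalues to coincide, yielding signature $(l,l)$ (under the non-degeneracy implicit in the statement).

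First I would establish the two symmetries. Differentiating \ref{Reversibility}(i) at the origin gives $DX_H(0)\, R = -R\, DX_H(0)$, which interchanges the $\pm i\omega_j$-eigenspaces; hence $E^c$ is $R$-invariant and I may write $\rho := R|_{E^c}$, with $\rho^2 = I$ and $\rho\Jay_1 = -\Jay_1 \rho$. The involution $R$ is in fact orthogonal: a short computation using $R^T\Jay R = -\Jay$ together with $R^2 = I$ gives $R^T R = I$. Consequently $\nabla H(Ru) = R \nabla H(u)$ (an immediate consequence of \ref{Reversibility}(i) and $R\Jay = -\Jay R$) integrates to $H \circ R = H$, whence $\rho^T A \rho = A$. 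For the scattering matrix, after a time shift arranging $R\gamma(t) = \gamma(-t)$, reversibility of the variational equation along $\gamma$ translates into the identity $\rho \sigma \rho = \sigma^{-1}$: applying $\rho$ swaps the asymptotic centre-subspace data used to build $\sigma$ at $\gamma(-\infty)$ with those at $\gamma(+\infty)$, effectively reversing the direction of scattering.

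Given these two identities, the remainder is a short computation. Writing $B := D^2\mathfrak{g} = \sigma^T A \sigma - A$ and using the equivalent form $\sigma \rho = \rho \sigma^{-1}$,
\[ \rho^T B \rho = (\sigma\rho)^T A (\sigma\rho) - \rho^T A \rho = \sigma^{-T} \rho^T A \rho\, \sigma^{-1} - A = \sigma^{-T} A \sigma^{-1} - A. \]
On the other hand $-\sigma^{-T} B \sigma^{-1} = -\sigma^{-T}(\sigma^T A \sigma - A)\sigma^{-1} = -A + \sigma^{-T} A \sigma^{-1}$, so $\rho^T B \rho = -\sigma^{-T} B \sigma^{-1}$. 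Both sides are congruences of $B$ by invertible matrices ($\rho$ and $\sigma^{-1}$), so by Sylvester's law the signatures of $B$ and $-B$ coincide, which forces the positive and negative eigenvalue counts of $B$ to match. In ambient dimension $2l$, under the non-degeneracy implicit in the statement, the only possibility is $(l,l)$. I expect the main technical obstacle to be the precise derivation of $\rho \sigma \rho = \sigma^{-1}$: this requires tracking through the construction of $\sigma$ in section \ref{scatteringmatrix} and checking that $\rho$ correctly intertwines the choices of symplectic bases in the strong-stable and strong-unstable tangent spaces at $\gamma(\pm\infty)$; once the normalisation $R\gamma(t) = \gamma(-t)$ is in place this is a careful bookkeeping exercise, and the rest of the proof is routine linear algebra.
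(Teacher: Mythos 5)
Your proposal is correct and takes essentially the same route as the paper: the two key identities you isolate (that $R$, after the normalisation making it orthogonal and symmetric on $E^c$, commutes with $D^2H(0)|_{E^c}$, and that $\sigma R\sigma = R$, equivalently $\rho\sigma\rho=\sigma^{-1}$) are exactly the content of the paper's Lemma in section \ref{specialcases}, and the conclusion is the same spectral symmetry of $D^2\mathfrak{g}$. The only (cosmetic) difference is the finish: you pass from $(\rho\sigma)^T\,D^2\mathfrak{g}\,(\rho\sigma)=-D^2\mathfrak{g}$ directly to the signature via Sylvester's law of inertia, whereas the paper first constructs a new inner product in which $R\circ\sigma$ becomes orthogonal so as to turn the anticommutation relation into a similarity --- your congruence argument is marginally more direct, and, as in the paper, both versions establish only that the positive and negative eigenvalue counts agree, with nondegeneracy of $D^2\mathfrak{g}$ implicitly assumed.
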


\subsection{The Lyapunov-Schmidt reduction} \label{Subsection:Reductiontoasingularity}

Returning now to system (\ref{Hamsystem}), that is,
\begin{equation} \dot{u} = X_{H}(u) \nonumber \end{equation}
with the homoclinic orbit $\gamma(t)$, we seek homoclinic orbits $\tilde{\gamma}(t)$ as perturbations of $\gamma(t)$, by first writing
\[ \tilde{\gamma}(t) = \gamma(t) + x(t). \]
Substituting this into (\ref{Hamsystem}) and rearranging for $x(t)$ brings us to the equation
\begin{equation}\label{Hamvarsystem} \dot{x}(t) = X_{H}(\gamma(t) + x(t)) - X_{H}(\gamma(t)) \end{equation}
We then define an operator $F$ by
\[ F(x) := \dot{x}(t) - X_{H}(\gamma(t) + x(t)) - X_{H}(\gamma(t)) \]
so that zeros of $F$ correspond to solutions of (\ref{Hamvarsystem}).

For convenience, we will modify $X_H$ outside a small neighbourhood of the homoclinic loop $\Gamma$ so that it will be identically zero
outside some larger (still small) neighbourhood of $\Gamma$ (this is achieved by multiplying the Hamiltonian to a smooth cut-off function, equal
to $1$ everywhere near $\Gamma$ and zero everywhere outside a small neighbourhood of $\Gamma$). Then, all zeros of (the modified)
operator $F$ which are uniformly close to zero will correspond to solutions of the original system which are uniformly close to $\gamma(t)$.

By choosing an appropriate domain $\mathcal{X}$ and target space $\mathcal{Y}$ for $F$,
we can search for solutions $x(t)$ which satisfy prescribed conditions on their asymptotic behaviour, which corresponds
to finding homoclinic solutions with desired features. Clearly, $F(0) = 0$. Taking a Frechet derivative of $F$ at $0$ leads us to the operator
\[ DF(0)x(t) := Lx(t) = \dot{x}(t) - DX_{H}(\gamma(t))x(t) \]
so that zeros of $L$ are solutions of the \emph{variational equation}
\begin{equation}\dot{x}(t) = DX_{H}(\gamma(t)) x(t). \label{Hamvareqn} \end{equation}
Note that one solution (which, since $\gamma(t)$ lies in the intersection of the stable and unstable manifolds of the equilibrium, decays
exponentially fast in both forward and backward time) of (\ref{Hamvareqn}) is given by $\dot{\gamma}(t)$.
A crucial point, discussed in more detail in the following section, is that $L$ is a \emph{Fredholm} operator.
This means by definition that $\ker(L) \subset \mathcal{X}$ is finite-dimensional, and the range $\mathcal{R}(L) \subset \mathcal{Y}$ is of finite codimension. The index of $L$ is then the integer ind$(L) = \dim \ker(L) - \mbox{codim}(\mathcal{R}(L))$. This will allow us to perform a
Lyapunov-Schmidt reduction of the map $F$ at zero. The procedure is as follows; we decompose $\mathcal{X}$ and $\mathcal{Y}$  in the following way
\begin{align*} \mathcal{X} & = \ker(L) \oplus \mathcal{M} \\
               \mathcal{Y} & = \mathcal{N} \oplus \mathcal{R}(L) \end{align*}
and now look for solutions of the following equivalent system, where the variable $x = k + w$ is split according to the decomposition of $\mathcal{X}$ and $P$ is the projection onto $\mathcal{R}(L)$ in $\mathcal{Y}$ with $\ker(P) = \mathcal{N}$;
\begin{equation} \begin{cases} \h\h PF(k + w) & = 0 \\
           (I - P)F(k + w) & = 0. \end{cases} \label{Splitsystem} \end{equation}
The advantage of this construction is that the derivative in the first component of the system with repect to $w$, $D_w PF(v)|_{(0)}$, is invertible, and so we can use the implicit function theorem to locally solve this first equation. This allows us to write $v \in \mathcal{X}$ as $k + w(k)$, where $w:\ker(L) \ra \mathcal{M}$ is such that
\[PF(k + \sigma) = 0 \Leftrightarrow \sigma = w(k) \]
in a neighbourhood of $x=0$. We note also that $D_k w(0) = 0$; differentiating the top component of (\ref{Splitsystem}) with respect to $k$ at zero leads to
\[ L(D_k w(0)) = 0, \]
and since $D_k w(0) \in \mathcal{M}$, we can invert $L$, yielding $D_k w(0) = 0$.
We are then only required to find zeros of the map defined by
\[ (I - P)F(k + w(k)):\ker(L) \ra \mathcal{N}, \]
which we denote by $\mathcal{G}(k)$. Zeros of $\mathcal{G}$ then correspond to zeros of the full system. Furthermore, $\mathcal{G}$ has a critical point at the origin:
\[ D\mathcal{G}(0) = (I-P)DF(k)(D_k w(k))|_{k=0} = 0. \]
So, as long as this singularity is nondegenerate (i.e. the Hessian matrix is invertible) we can locally describe the zero set of $\mathcal{G}$ by classifying the critical point at the origin and appealing to the Morse lemma, which gives us a normal form for the quadratic part of $\mathcal{G}$.


\section{Weighted function spaces and Fredholm properties} \label{Section:Weighted}

For $\beta \in \real$, we define the Banach space
\[ C^1_{\beta}(\real, \real^{2n}) = \{ x: \real \ra \real^{2n} \mbox{ with } \sup_{t\in\real}\| e^{\beta |t|} x(t) \| < \infty, \: \sup_{t\in\real}\| e^{\beta |t|} \dot{x}(t) \| < \infty \}. \]
We will require the following result;
\begin{lemma}
There exists a $\beta \in (0, \alpha)$ such that a solution $x(t)$ of the equation $F(x) = 0$ gives rise to an orbit $\tilde{\gamma}(t) = \gamma(t) + x(t)$ which remains in a tubular neighbourhood of $\gamma(t)$ and is homoclinic to the centre manifold of the origin  if and only
if $x(t) \in C^1_{-\beta}(\real, \real^{2n})$, and $x(t)$ is uniformly small for all $t$.
\end{lemma}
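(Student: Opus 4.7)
The plan is to exploit the classical invariant-manifold characterisation at an elliptic-hyperbolic equilibrium: a point whose forward (resp.\ backward) orbit remains in a sufficiently small neighbourhood $U$ of the origin belongs to $W^{cs}$ (resp.\ $W^{cu}$). Combined with the exponential decay of $\gamma(t)$ at rate at least $\alpha$, this translates uniform smallness of $x(t)$ directly into $\tilde\gamma\subset W^{cs}\cap W^{cu}$, i.e.\ homoclinicity to $W^c$. I would fix any $\beta\in(0,\alpha)$; the strict gap is what lets the weighted space $C^1_{-\beta}$ interact well with the Fredholm framework needed later, but at this stage it plays no essential role. Note first that the cut-off modification of $X_H$ was arranged so that for $\tilde\gamma=\gamma+x$ inside the tubular neighbourhood of $\Gamma$, the equation $F(x)=0$ is exactly the original Hamiltonian equation $\dot{\tilde\gamma}=X_H(\tilde\gamma)$.

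\emph{Sufficiency.} Suppose $F(x)=0$, $x\in C^1_{-\beta}$, and $\sup_t\|x(t)\|\le\varepsilon$. Then $\tilde\gamma(t)$ is trapped in the tubular neighbourhood for all $t$, so it solves (\ref{Hamsystem}) there. Because $\gamma(t)\to 0$ exponentially as $|t|\to\infty$, there exists $T>0$ with $\gamma(t)\in U/2$ for $|t|\ge T$; choosing $\varepsilon$ sufficiently small forces $\tilde\gamma(t)\in U$ for $|t|\ge T$. Hence the forward orbit of $\tilde\gamma(T)$ stays in $U$ for all positive time, so $\tilde\gamma(T)\in W^{cs}$ by the invariant-manifold theorem; symmetrically $\tilde\gamma(-T)\in W^{cu}$. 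Invariance of $W^{cs}, W^{cu}$ then yields $\tilde\gamma\subset W^{cs}\cap W^{cu}$, so the orbit is homoclinic to $W^c$.

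\emph{Necessity.} Conversely, if $\tilde\gamma$ remains in a tube of radius $\varepsilon$ around $\Gamma$ then $\|x(t)\|=\|\tilde\gamma(t)-\gamma(t)\|\le\varepsilon$, which is uniform smallness. Since $X_H$ is $C^3$ and $\tilde\gamma,\gamma$ stay in a compact set, the derivative $\dot x(t)=X_H(\tilde\gamma(t))-X_H(\gamma(t))$ is uniformly bounded as well. Hence $\|e^{-\beta|t|}x(t)\|$ and $\|e^{-\beta|t|}\dot x(t)\|$ are both dominated by the unweighted suprema, so $x\in C^1_{-\beta}$ automatically.

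The main obstacle is the invocation, in the sufficiency direction, of the characterisation of $W^{cs}$ and $W^{cu}$ as the sets of points whose forward / backward orbit stays in a neighbourhood of the equilibrium for all time; this is standard but non-trivial invariant manifold theory for elliptic-hyperbolic equilibria (e.g.\ via the graph transform in the presence of a centre spectrum), and it is what pins down the correspondence. The remaining quantitative work is soft: one chooses $U$ small enough for that theorem to apply, then $\varepsilon$ small enough that the $\varepsilon$-tube around $\gamma$ lies in $U$ once $|t|\ge T$; both choices are independent of $\beta$, so any $\beta\in(0,\alpha)$ indeed works.
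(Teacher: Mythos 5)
Your proof is correct and follows essentially the same route as the paper: the sufficiency direction invokes the same characterisation of $W^{cs}$ and $W^{cu}$ as the sets of points whose forward (resp.\ backward) orbits remain near the equilibrium, and the necessity direction rests on the observation that a bounded $x$ automatically lies in the growth-weighted space $C^1_{-\beta}$. The only cosmetic difference is that the paper obtains boundedness of $x$ via the limiting orbit $\eta(t)$ in $W^c$ rather than directly from the tube condition, while you are slightly more explicit about the bound on $\dot{x}$.
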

\begin{proof}
If $ \tilde{\gamma}(t) \in W^{cu}(0)\cap W^{cs}(0)$, then it approaches an orbit $\eta(t)$ in the center manifold;
\[ \| \tilde{\gamma}(t) - \eta(t) \| \ra 0 \mbox{ as } t \ra \infty \]
but since $\gamma(t) \ra 0 $ exponentially fast, we have
\begin{equation} \| x(t) - \eta(t) \|  \ra 0 \mbox{ as } t \ra \infty \label{xtendstoeta} \end{equation}
Since $\eta(t)$ is contained in the center manifold, $\eta \in C^1_{-\beta}(\real, \real^{2n})$ for any $\beta \in (0, \alpha)$, and hence we can use (\ref{xtendstoeta}) to conclude that $x \in C^1_{-\beta}(\real,\real^{2n})$. Moreover, if $\tilde{\gamma}(t)$ lies in a small tubular neighbourhood of $\gamma(t)$ then the norm of $x(t)$ is necessarily small.

Conversely, assume that $x(t)$ is uniformly small in norm, and $F(x) = 0$. Then $\tilde{\gamma}(t)=x(t)+\gamma(t)$ defines a trajectory of system which stays in a small tubular neighbourhood of $\gamma(t)$. In particular it stays in a small neighbourhood of zero for all sufficiently large values of $|t|$. Therefore, just
by the definition of the center-stable and center-unstable manifolds $\tilde\gamma(t)$ stays in $W^{cs}$ for all large $t>0$, hence it must tend to
a bounded orbit in  $W^c$ as $t\to+\infty$, and, as $t\to-\infty$, it stays in $W^{cu}$, which implies that it tends to a bounded orbit in $W^c$ as $t\to-\infty$
as well (see e.g. \cite{book} for more detail).
\end{proof}

This result justifies the use of an exponentially weighted norm on the domain of $F$ to capture all of the solutions which do not grow faster than a given exponential factor.
 Letting $\phi(t) \in C^1(\real,\real)$ be such that
\[ \begin{cases} \phi(t) = |t| \mbox{ for } t \in (-\infty, -1] \cup [1, \infty) \\
                           \sup_{t \in [-1,1]} | \phi(t) - |t| | << 1    \\
                           \phi(t) > 0 \mbox{ for } t\in \real,  \end{cases} \]
we now consider the weighted inner product
\[ <u,v>_{\delta} \:\: = \int_{\real} e^{-2\delta\phi(t)}<u(t),v(t)>dt \]
which is defined for any $u, v \in C^1_{-\beta}(\real, \real^{2n})$ with $0<\beta < \delta$. We are hence free to choose $\beta$, $\delta$ satisfying the following condition.
\begin{cnd}
The constants satisfy $0 < \beta < \delta < \alpha$, and $\delta - \alpha < \beta - \delta$, where $\alpha$ is as defined in assumption \ref{specass}. \label{Constantsass}
\end{cnd}

We calculate an expression for the adjoint $L^*$ with respect to the weighted inner product as follows;
\begin{align*}
 \left< L u, v \right>_{\delta} =  & \int_{\real}  e^{-2\delta \phi(t)} \left< \dot{u}(t) - DX_H(\gamma(t))u(t), v(t) \right> dt \\
                                                                        =  & \int_{\real}  \left< \dot{u}(t), e^{-2\delta \phi(t)}v(t) \right>  - e^{-2\delta \phi(t)}\left< DX_H(\gamma(t))u(t), v(t) \right> dt
\end{align*}
\begin{align*}
                                                                        =  & \int_{\real} - \left< u(t), \frac{d}{dt}(e^{-2\delta \phi(t)}v(t)) \right>  - e^{-2\delta \phi(t)}\bigg< u(t), DX_H(\gamma(t))^{*} v(t) \bigg> dt \\
                                                                        = & \int_{\real} - e^{-2\delta \phi(t)}\left< u(t), \frac{d}{dt}v(t) -2\delta \dot{\phi}(t)v(t) \right>  - e^{-2\delta \phi(t)}\bigg< u(t), DX_H(\gamma(t))^{*} v(t) \bigg> dt,
\end{align*}

We conclude from this line that
\[ L^* = -\frac{d}{dt} + 2\delta \dot{\phi}(t) - DX_H(\gamma(t))^{*} \]
we refer to $L^*u = 0$ as the \emph{adjoint variational equation}.

\begin{lemma}
$DF(0,0) := L : C^{1}_{-\beta}(\real,\real^{2n}) \ra C^{0}_{-\beta}(\real,\real^{2n})$ is a Fredholm operator of index $2l$. Furthermore,
$y(t) \in \mathcal{R}(L)$ if and only if
\[ \int_{\real} e^{-2\delta \phi(t)} \left< y(t), \psi(t) \right>dt = 0, \mbox{ for every $\psi \in C^1_{-\beta}$ solving } L^* \psi = 0. \]
 \label{Fredholmlemma}
\end{lemma}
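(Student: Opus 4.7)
The plan is to reduce both assertions to the classical Fredholm theory of Palmer for first-order ODE operators with asymptotically constant, hyperbolic coefficients, applied after a gauge transformation that absorbs the exponential weight.

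I would first substitute $u(t) = e^{-\beta\phi(t)} x(t)$, which defines a Banach-space isomorphism $C^k_{-\beta}(\real, \real^{2n}) \to C^k_b(\real, \real^{2n})$ for $k = 0, 1$. Under this conjugation, $L$ is carried to the operator
\[ \tilde L u = \dot u - \bigl( DX_H(\gamma(t)) - \beta \dot\phi(t)\, I \bigr) u \]
on standard bounded spaces. Since $\gamma(t) \to 0$ exponentially and $\dot\phi(t) \to \pm 1$, the coefficient matrix tends exponentially to $\tilde A_\pm := DX_H(0) \mp \beta I$ as $t \to \pm\infty$. Crucially, both $\tilde A_\pm$ are hyperbolic: the central eigenvalues $\pm i\omega_j$ of $DX_H(0)$ are shifted off the imaginary axis to real part $\mp \beta \neq 0$, while the hyperbolic eigenvalues $\lambda_i$ retain the sign of their real parts because $|{\rm Re}\, \lambda_i| > \alpha > \beta$. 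A direct count yields $\dim E^u(\tilde A_-) = n + l$ and $\dim E^u(\tilde A_+) = n - l$.

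By the classical theorem of Palmer, this asymptotic hyperbolicity produces exponential dichotomies for $\tilde L$ on both half-lines, and $\tilde L$ is Fredholm on $C^0_b$ with index $\dim E^u(\tilde A_-) - \dim E^u(\tilde A_+) = 2l$. Transporting through the gauge isomorphism gives that $L$ itself is Fredholm of index $2l$. For the range characterization, I would invoke the Fredholm alternative in the weighted pairing $\langle \cdot, \cdot \rangle_\delta$. This pairing is well-defined and non-degenerate on $C^0_{-\beta} \times C^0_{-\beta}$ since the integrand is dominated by $e^{(2\beta - 2\delta)|t|}$ with $\beta < \delta$; combined with the adjoint formula derived just above the lemma, this identifies the annihilator of $\mathcal{R}(L)$ with the set of $\psi \in C^1_{-\beta}$ solving $L^* \psi = 0$, which is exactly the stated condition.

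The main technical obstacle is to check that the kernel of $L^*$ actually lies in $C^1_{-\beta}$ (and not in some strictly larger weighted class), so that the orthogonality characterisation is sharp as stated. This is where the asymmetric Condition \ref{Constantsass} enters: the inequality $\delta - \alpha < \beta - \delta$ is equivalent to $2\delta - \alpha < \beta$, which is precisely the bound forcing the exponential growth rates of the solutions of the shifted adjoint equation at $\pm\infty$ (governed by eigenvalues of $-DX_H(0)^* + 2\delta I$ with real parts bounded by $2\delta - \alpha$) to fall below $\beta$, matching the weight of $C^1_{-\beta}$ and making the integration by parts used in the adjoint calculation genuinely valid, with vanishing boundary terms. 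Once these exponent bookkeepings are in place, the index count and range characterization are direct consequences of standard Palmer-type dichotomy theory.
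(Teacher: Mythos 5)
Your proposal is correct and rests on the same engine as the paper's proof (an exponential gauge transformation followed by Palmer's theorem), but you choose a different weight in the gauge, and this changes the bookkeeping in an instructive way. The paper conjugates by $e^{-\delta\phi(t)}$, landing in $C^k_{\delta-\beta}$, which is a \emph{proper} subspace of the bounded functions to which Palmer's theorem applies; it therefore needs the extra step of checking that the kernels of $\hat L_\delta$ on $C^k_b$ and of $L_\delta$ on $C^k_{\delta-\beta}$ coincide (and likewise for the adjoint), which is where Condition \ref{Constantsass} enters for them. You conjugate by $e^{-\beta\phi(t)}$, landing exactly on $C^k_b$, so Palmer's theorem applies verbatim and the index count $(n+l)-(n-l)=2l$ is immediate --- this is the cleaner route to the Fredholm statement. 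The price is paid on the range characterisation: your phrase ``invoke the Fredholm alternative in the weighted pairing'' should not be read as an appeal to an abstract Fredholm alternative (which in a Banach space concerns the dual space, not an arbitrary continuous pairing); the reverse inclusion $(\ker L^*)^{\perp_\delta}\subseteq\mathcal{R}(L)$ must come from Palmer's range characterisation for $\tilde L$, transported through the gauge. Concretely, bounded solutions $\tilde\psi$ of the adjoint of $\tilde L$ correspond to solutions $\psi=e^{(2\delta-\beta)\phi}\tilde\psi$ of $L^*\psi=0$, and one checks $\int_{\real}e^{-2\delta\phi}\langle y,\psi\rangle\,dt=\int_{\real}\langle e^{\beta\phi}y,\tilde\psi\rangle\,dt$, so the two orthogonality conditions agree; membership $\psi\in C^1_{-\beta}$ then requires $\tilde\psi=O(e^{-(2\delta-2\beta)\phi})$, and the decay rate $e^{(\beta-\alpha)t}$ of bounded adjoint solutions suffices exactly when $\alpha+\beta\geq 2\delta$, i.e.\ Condition \ref{Constantsass}. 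You correctly identified this exponent inequality as the crux, so the gap is presentational rather than substantive; with the explicit correspondence above spelled out, your argument is complete and arguably tidier than the one in the text.
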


\noindent To prove lemma \ref{Fredholmlemma} we will make use of a conjugacy beetween $L$ and a `shifted' version of $L$ on a differently weighted function space. We observe that $L = DF(0,0) : C^1_{-\beta} \ra C^0_{-\beta}$ is conjugate to the shifted operator $ L_{\delta} : C^1_{\delta - \beta}  \ra C^0_{\delta - \beta}$ given by
\[  L_{\delta}u(t)  = \frac{du}{dt} - \delta \dot{\phi}(t) u(t) - DX_{\tilde{H}}(\gamma(t))u(t), \]
The conjugacy is given by the isomorphism $v(t) \mapsto e^{-\delta \phi(t)}v(t)$ which maps from $C^1_{-\beta}$ into $ C^1_{\delta - \beta}$, which is endowed with the unweighted inner product. The utility of this conjugacy stems from the fact that the limits
\begin{equation} \lim_{t\ra \pm \infty} (\delta \dot{\phi}(t) I + DX_{H}(\gamma(t)) ) \label{Limitmatrices} \end{equation}
are now hyperbolic, since the imaginary eigenvalues of $DX_{H}(0)$ are now shifted, to the right of the imaginary axis in negative time and to the left in positive time. We will make use of the following theorem:\\

\noindent{\bf Palmer Theorem} \cite{Palmer1984} {\em
Let $A(t)$ be an $n \times n$ matrix function bounded and continuous on $\real$ and such that
\[ \lim_{t \ra -\infty} A(t) = A_{-\infty}, \mbox{    } \lim_{t\ra \infty} A(t) = A_{\infty} \]
exist and are hyperbolic. Then
\begin{align*}  B : C^1( & \real, \real^{2n}) \ra C^0(\real, \real^{2n}) \\
                                   Bx & = \dot{x}(t) - A(t)x(t) \end{align*}
is Fredholm, and $y \in \mathcal{R}(B)$ if and only if
\[ \int_{\real} \left< y(t), \psi(t) \right>dt = 0, \mbox{ for every bounded $\psi$ solving } \dot{\psi}(t) = -A^*(t)\psi(t). \]
Furthermore, if $A_{-\infty}$, $A_{\infty}$ have $a_-$ and $a_+$ unstable eigenvalues respectively, then
\[ \ind(L) = a_- - a_+. \]}\\

\begin{proof}[Proof of lemma \ref{Fredholmlemma}]
We first consider our shifted operator defined on the larger function space $C^1(\real, \real^{2n})$ of bounded continuous functions, as in the statement of Palmer theorem. Call this operator $\hat{L}_\delta$. Applying Palmer theorem to $\hat{L}_\delta$  tells us that the index of $\hat{L}_\delta = 2l$. Firstly this means that $\ker(\hat{L}_\delta) < \infty$. This remains true for $L_\delta$, since $\ker(\hat{L}_\delta) = \ker(L_{\delta})$: any bounded solutions decay at a rate of at least $e^{\delta - \alpha}$ in negative time and $e^{-(\alpha + \delta)}$ in positive time (as can be seen by looking at the spectrum of the limit matrices in (\ref{Limitmatrices})), and so, in particular, faster than $e^{\beta - \delta}$ in both time directions, as a consequence of condition \ref{Constantsass}. Hence, these solutions lie in $C^1_{\delta - \beta}$.

The application of Palmer theorem also gives $\mathcal{R}(\hat{L}_{\delta}) = \ker(\hat{L}^*_{\delta})^{\perp}$. We find that $\ker(\hat{L}^*_{\delta}) = \ker(L^*_{\delta})$ for the same reasons as in the previous paragraph, and so
\[ \ker(L^*_{\delta})^{\perp} = \mathcal{R}(\hat{L}_{\delta})\cap C^0_{\delta - \beta} = \mathcal{R}(L_{\delta}) \]
is it clear from these considerations that $\ind(\hat{L}_\delta) = \ind(L_\delta)$.

Finally, applying the inverse of the conjugacy brings us back to the original operator $L$, preserving the required properties.
\end{proof}
\begin{figure}[h]
\centering
\def\svgwidth{80mm}
\caption{Eigenvalues cross the imaginary axis from right to left, as time progresses through $\real$, inducing a positive Fredholm index.}
\end{figure}

We note that assumption \ref{transass} implies that $\dot{\gamma}(t)$ is the only solution (up to a scalar multiple) of the variational equation which decays at an exponential rate (in fact, $\dot{\gamma}(t) \in C^1_{\alpha}(\real, \real^{2n})$). This also implies that the only (again, up to a scalar multiple) exponentially decaying solution of the adjoint variational equation (with respect to the unweighted inner product) is given by $\Jay\dot{\gamma}(t) = \nabla H(\gamma(t))$.

\begin{lemma}
$ \ker(L^*)  = \linspan \{ e^{2\delta \phi(t) }\nabla H(\gamma(t)) \}$.
\end{lemma}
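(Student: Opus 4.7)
The plan is to reduce the equation $L^*\psi=0$ to the unweighted adjoint variational equation via an exponential substitution, and then invoke the uniqueness statement from the paragraph immediately preceding this lemma. Specifically, I would set $\chi(t):=e^{-2\delta\phi(t)}\psi(t)$. A short calculation using the explicit expression $L^* = -\frac{d}{dt} + 2\delta\dot\phi(t) - DX_H(\gamma(t))^{*}$ shows that $L^*\psi=0$ is equivalent to $\dot\chi(t)=-DX_H(\gamma(t))^{*}\chi(t)$, i.e. $\chi$ solves the (unweighted) adjoint variational equation.

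For the forward inclusion, if $\psi\in C^1_{-\beta}\cap\ker L^*$ then $|\psi(t)|\le C e^{\beta|t|}$, giving $|\chi(t)|\le C e^{-(2\delta-\beta)|t|}$. Since $2\delta-\beta>0$ by Condition \ref{Constantsass}, $\chi$ is an exponentially decaying solution of the adjoint variational equation, and by the remark immediately above the lemma (a consequence of Assumption \ref{transass}), the space of such solutions is one-dimensional and spanned by $\nabla H(\gamma(t))$. Hence $\chi=c\,\nabla H(\gamma(t))$ for some scalar $c$, and therefore $\psi(t)=c\,e^{2\delta\phi(t)}\nabla H(\gamma(t))$.

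For the reverse inclusion I only need to check that $\psi_0(t):=e^{2\delta\phi(t)}\nabla H(\gamma(t))$ genuinely lies in $C^1_{-\beta}$, since it is annihilated by $L^*$ by the equivalence above. Because $\nabla H(0)=0$ and $|\gamma(t)|\le C e^{-\alpha|t|}$, we have $|\nabla H(\gamma(t))|\le C e^{-\alpha|t|}$, so $|\psi_0(t)|\le C e^{(2\delta-\alpha)|t|}$ for large $|t|$, with an analogous bound for $\dot\psi_0$ obtained by differentiating and using that $\dot\phi$ is bounded. The inequality $2\delta-\alpha\le\beta$, which is exactly $\delta-\alpha<\beta-\delta$ from Condition \ref{Constantsass}, then places $\psi_0$ in $C^1_{-\beta}$.

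The main thing to watch is the interplay between the weight $-\beta$ and the decay rate of the homoclinic: the candidate generator $e^{2\delta\phi(t)}\nabla H(\gamma(t))$ grows like $e^{(2\delta-\alpha)|t|}$, and it is admissible only because Condition \ref{Constantsass} was engineered to deliver $2\delta - \alpha \le \beta$. Once the correct constants are in place, the reduction to the adjoint variational equation makes the rest of the argument essentially automatic.
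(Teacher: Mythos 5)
Your proposal is correct and follows essentially the same route as the paper: the substitution $\chi=e^{-2\delta\phi}\psi$ reducing $L^*\psi=0$ to the unweighted adjoint variational equation, followed by the one-dimensionality of its exponentially decaying solution space coming from Assumption \ref{transass}. Your explicit verification of the growth rates ($2\delta-\beta>0$ for the forward inclusion and $2\delta-\alpha\le\beta$ for membership of $e^{2\delta\phi(t)}\nabla H(\gamma(t))$ in $C^1_{-\beta}$) makes precise a step the paper's proof only asserts.
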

\begin{proof}
If $L^*u = 0$ with $u(t)\in C^1_{-\beta}(\real, \real^{2n})$, then
\begin{align*} 0 = L^*u & = e^{-2\delta\phi(t)}L^*u \\
   & =  - e^{-2\delta \phi(t)}\frac{du}{dt} + 2\delta\dot{\phi}(t) e^{-2\delta\phi(t)}u(t)  - DX_{\tilde{H}}(\gamma(t),0)^*e^{-2\delta\phi(t)} u(t)    \\
& = -\frac{d}{dt}(e^{-2\delta\phi(t)}u(t))  - DX_{\tilde{H}}(\gamma(t),0)^*e^{-2\delta\phi(t)} u(t) \end{align*}
The expression on the right hand side here is the adjoint variational equation with respect to the unweighted inner product. Now, $e^{-2\delta\phi(t)}u(t)$ is an exponentially decaying solution of the unweighted adjoint variational equation, and hence $e^{-2\delta\phi(t)}u(t) \in \linspan\{ \nabla H(\gamma(t)) \}$, meaning that $u(t) \in \linspan \{ e^{2\delta \phi(t) }\nabla H(\gamma(t)) \}$.

Similarly, if $ v(t) \in \linspan\{ \nabla H (\gamma(t))\}$, then v(t) solves
\[ -\frac{d}{dt}v(t) - DX_{\tilde{H}}(\gamma(t),0)^* v(t) = 0 \]
while $e^{2\delta \phi(t)}v(t) \in C^1_{-\beta}(\real, \real^{2n})$ and
\begin{align*} -\frac{d}{dt}(e^{2\delta\phi(t)v(t)}) & +  2\delta \dot{\phi}(t) e^{2\delta\phi(t)}v(t)   - e^{2\delta\phi(t)}DX_{\tilde{H}}(\gamma(t),0)^*v(t)  = \\
\vspace{-2mm}& = -2\delta \dot{\phi}(t) e^{2\delta\phi(t)}v(t) - e^{2\delta\phi(t)}\frac{d}{dt}v(t) + 2\delta \dot{\phi}(t)e^{2\delta\phi(t)}v(t)  \\ & \hspace{2cm} - e^{2\delta\phi(t)}DX_{\tilde{H}}(\gamma(t),0)^*v(t) \\
                         \vspace{-2mm}         & = e^{2\delta\phi(t)}(L^*(v(t))) = 0 \end{align*}
\end{proof}
Hence $\mathcal{R}(L)^{\perp}$ is one-dimensional, and so $\dim(\ker(L)) = 1 + $ind$(L) = 2l + 1$.

As a check, we observe that if $\psi(t) \in C^1_{-\beta}(\real,\real^{2n})$ is a solution of the adjoint variational equation, and   $f \in \mathcal{R}(L)$, that is, $f(t) = \dot{x}(t) - DX_{H}(q(t),0)x(t)$ for some $x(t) \in C^1_{-\beta}(\real,\real^{2n}) $ then
\begin{align*}
\left<\psi(t), f(t) \right>_{\delta} = & \int_{\real} e^{-2\delta \phi(t)} \bigg< \psi(t), \dot{x}(t) - DX_H(t)(\gamma(t),0)x(t)\bigg> dt \\
                                                       = & \int_{\real} e^{-2\delta \phi(t)}\bigg< \psi (t), \dot{x}(t) \bigg> - \bigg< DX_H(\gamma(t),0)^* \psi(t), x(t) \bigg> dt  \\
                                                       = & \int_{\real} e^{-2\delta \phi(t)}\bigg< \psi (t), \dot{x}(t) \bigg> + \bigg<\dot{\psi}(t) - 2\delta\dot{\phi}(t)\psi(t), x(t) \bigg> dt \\
                                                       = & \int_{\real} e^{-2\delta \phi(t)}\left( \frac{d}{dt} \bigg< \psi(t),x(t) \bigg> - 2\delta\dot{\phi}(t)\bigg<\psi(t), x(t)\bigg> \right) dt \\
                                                       = & \int_{\real} \frac{d}{dt}\left( e^{-2\delta \phi(t)}\bigg<\psi(t), x(t)\bigg>\right)dt \\
                                                       = & \bigg[ e^{-2\delta \phi(t)}\bigg<\psi(t), x(t)\bigg> \bigg]^{\infty}_{-\infty} = 0
\end{align*}

When we construct a reduced map by Lyapunov-Schmidt reduction, we project onto $\ker(L^*)$ by taking the weighted inner product with this unique exponentially decaying solution. The exponential factors in the weight and the solution will then cancel, leaving us with an expression which involves an unweighted inner product.

The results from this section facilitate a Lyapunov-Schmidt reduction of the map $F$ at zero according to a decomposition of the following form;
\[    C^{1}_{-\beta}(\real, \real^{2n}) = \ker(L) \oplus \mathcal{M} \]
\[    C^{0}_{-\beta}(\real, \real^{2n}) = \ker(L^*) \oplus \mathcal{R}(L) \]

Performing the reduction as described in section \ref{Subsection:Reductiontoasingularity} leads to the reduced map
\[ \mathcal{G}(k): = (I - P)F(k + w(k)):\ker(L) \ra \ker(L^*) \]
so $\mathcal{G}$ maps from a $(2l+1)$-dimensional space into a $1$-dimensional space, as a consequence of the
positive Fredholm index of $L$, and $\mathcal{G}$ has a critical point at the origin. This proves the first part of theorem \ref{LSreductionthm}.

\subsection{The Hessian Matrix} \label{Section:Singularitytheory}
We now study this critical point of the reduced map $\mathcal{G}(k)$ by investigating the Hessian matrix. For the calculations, we now let  $k_i$, $i \in \{1, ... , 2l +1\}$ be a chosen basis of $\ker (L)$, with $k_1 =  \dot{\gamma}(t)$, and we write $\mathfrak{g}(\beta_1,...,\beta_{2l+1}) :=\mathcal{G}(\beta_1 k_1, ... , \beta_{2l+1} k_{2l+1})$, so that

\begin{align*}
\mathfrak{g}(\beta) =  \int_{\real} e^{-2\delta \phi(t)}& \Bigl< \Bigr. e^{2\delta \phi(t)}\nabla H(\gamma(t)),  \dot{\gamma}(t) + \Sigma_i \beta_i \dot{k_i}(t) \\
& + \dot{w}(\beta)(t) - X_H(\gamma(t) +  \Sigma_i \beta_i k_i(t) + w(\beta)(t)) \Bigl. \Bigr> dt   \end{align*}

The following lemma provides a formula for the derivatives of $\mathfrak{g}(0)$. The proof is the same in essence as the one in \cite{Gruendler1992} (theorem 5), in which a homoclinic orbit to a hyperbolic equilibrium is studied. We include the proof here for completeness.
\begin{lemma}
\begin{subequations}
\begin{eqnarray}
\frac{\p \mathfrak{g}}{\p \beta_i}(0) & = & 0  \label{reduced derivs 1} \\
 \frac{\p^2 \mathfrak{g}}{\p \beta_i \p \beta_j}(0) & = & \int_{\real} \left< \dot{\gamma}(t),  D_{x}^3 H(\gamma(t))(k_i(t),k_j(t)\right>dt     \label{reduced derivs 2}
\end{eqnarray}
\label{reduced derivs}
\end{subequations}
\end{lemma}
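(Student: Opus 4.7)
My plan is to exploit the cancellation between the weight in the inner product and the $e^{2\delta\phi(t)}$ factor in the spanning element of $\ker(L^*)$. Since the Lyapunov--Schmidt reduction projects onto $\ker(L^*) = \linspan\{e^{2\delta\phi(t)}\nabla H(\gamma(t))\}$ via $\langle\cdot,\cdot\rangle_\delta$, these two exponentials cancel exactly, giving
\[ \mathfrak{g}(\beta) = \int_\real \left\langle \nabla H(\gamma(t)), \mathcal{F}(\beta)(t)\right\rangle dt, \qquad \mathcal{F}(\beta) := F\bigl(\textstyle\sum_i \beta_i k_i + w(\beta)\bigr). \]
The integrand decays like $O(e^{-(\alpha-\beta)|t|})$, because $\nabla H(\gamma) \in C^1_{\alpha}$ while $\mathcal{F}(\beta) \in C^0_{-\beta}$ with $\beta<\alpha$ by Condition \ref{Constantsass}, so I may differentiate under the integral freely.

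Next I Taylor expand $F(x) = Lx - \tfrac12 D^2 X_H(\gamma)(x,x) + O(\|x\|^3)$ about $x=0$ and substitute $x = k(\beta) + w(\beta)$, where $k(\beta) = \sum_i \beta_i k_i$. Using $Lk_i = 0$, $w(0)=0$ and $D_\beta w(0)=0$ (the last of these already established in the excerpt), one differentiation at $\beta = 0$ gives $\partial_{\beta_i}\mathcal{F}\bigl|_0 = Lk_i + L(D_\beta w(0) e_i) = 0$, immediately yielding \eqref{reduced derivs 1}. A second differentiation, together with $w(\beta) = O(\|\beta\|^2)$, produces
\[ \partial_{\beta_i}\partial_{\beta_j}\mathcal{F}\bigl|_0 \;=\; L\bigl[D^2_\beta w(0)(e_i,e_j)\bigr] \;-\; D^2 X_H(\gamma)(k_i, k_j). \]

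It remains to integrate this against $\nabla H(\gamma)$. Setting $\xi := D^2_\beta w(0)(e_i,e_j) \in \mathcal{M} \subset C^1_{-\beta}$ and combining $DX_H = \Jay D^2 H$ with $\Jay^T = -\Jay$, one verifies that $\tfrac{d}{dt}\nabla H(\gamma) = -DX_H(\gamma)^*\nabla H(\gamma)$; i.e.\ $\nabla H(\gamma)$ solves the \emph{unweighted} adjoint equation. Integration by parts therefore collapses $\int\langle\nabla H(\gamma), L\xi\rangle dt$ into $\int \tfrac{d}{dt}\langle\nabla H(\gamma),\xi\rangle dt$, whose boundary contributions vanish since $|\langle\nabla H(\gamma),\xi\rangle| = O(e^{-(\alpha-\beta)|t|})$. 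For the surviving term, $D^2 X_H = \Jay D^3 H$ together with $\Jay^T = -\Jay$ and $\Jay\nabla H(\gamma) = \dot\gamma$ gives
\[ -\left\langle \nabla H(\gamma), \Jay D^3 H(\gamma)(k_i,k_j)\right\rangle = \left\langle \dot\gamma, D^3 H(\gamma)(k_i,k_j)\right\rangle, \]
producing \eqref{reduced derivs 2}. The only non-routine point I anticipate is justifying the integration by parts, which requires $\xi$ to inherit the weighted decay of $\mathcal{M}$; this follows from $w$ being $C^2$ as a map into the Banach space $\mathcal{M}\subset C^1_{-\beta}$ (using that $H$ is $C^3$), combined with Condition \ref{Constantsass} ensuring $\alpha > \beta$ strictly. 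Once these decay estimates are in place the rest of the argument is pure chain-rule bookkeeping.
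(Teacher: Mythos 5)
Your proposal is correct and follows essentially the same route as the paper: expand $F$ to second order, note that the term $L\bigl[D^2_\beta w(0)(e_i,e_j)\bigr]$ is annihilated by the projection onto $\ker(L^*)$, and convert $-\langle\nabla H(\gamma),\Jay D^3H(\gamma)(k_i,k_j)\rangle$ into $\langle\dot\gamma,D^3H(\gamma)(k_i,k_j)\rangle$ using $\Jay^T=-\Jay$. The only cosmetic difference is that you kill the $L\xi$ term by an explicit integration by parts against the unweighted adjoint solution, whereas the paper simply cites $\mathcal{R}(L)=\ker(L^*)^\perp$ from Lemma \ref{Fredholmlemma} --- but that orthogonality is verified by exactly your integration-by-parts computation in the ``as a check'' passage preceding the lemma, so the arguments coincide.
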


\begin{proof}
The first equation simply states that the reduced map has a singularity at the origin, which is true for any map produced in this way via the Lyapunov-Schmidt reduction, as discussed in section \ref{Subsection:Reductiontoasingularity}. As for the second, differentiating $\mathfrak{g}$ twice and evaluating at $\beta = 0$ gives:
\begin{align*}
\frac{\p^2 \mathfrak{g}}{\p \beta_i \p \beta_j} = & \int_{\real} \left< \nabla H(\gamma(t)), \frac{\p^2 \dot{w}(0)}{\p \beta_i \p \beta_j} - DX_H(\gamma(t)) \frac{\p^2 w(0)}{\p \beta_i \p \beta_j} \right> dt \\
& - \int_{\real} \left< \nabla H(\gamma(t)), D^2X_H(\gamma(t))(k_i(t),k_j(t)) \right> dt
\end{align*}
and the first term is zero for each $(i,j)$, since $\frac{\p^2 \dot{w}(0)}{\p \beta_i \p \beta_j} - DX_H(\gamma(t)) \frac{\p^2 w(0)}{\p \beta_i \p \beta_j}$ lies in the range of $L$. The final step is to recall that $X_H$ can be written as $-\Jay \nabla H$, and that $\dot{\gamma}(t) = -\Jay \nabla H(\gamma(t))$. Applying the isometry $\Jay$ in both sides of the inner product and using these facts yields (\ref{reduced derivs 2}).
\end{proof}

In fact, we can restrict our attention to finding zeros of $\mathfrak{g}$ with its first argument (the coefficient of $\dot{\gamma}(t)$) fixed at zero. Considering the direct sum decomposition
\[    C^{1}_{-\beta}(\real, \real^{2n}) = \ker(L) \oplus \mathcal{M}, \]
we can choose $\mathcal{M}$ to be $\ker(L)^{\perp}$, the orthogonal
complement with respect to the weighted inner product $ \left<u,v \right>_{\delta}$, which can be constructed due to the finite dimensionality of $\ker(L)$.
This being done, and having chosen an orthogonal basis $\{\dot{\gamma}(t),k_2(t),...,k_{2l-1}(t)\}$ for $\ker(L)$, we have
that $k+w(k)$ satisfies
\begin{align*} \int_{\real} e^{-2\delta\phi(t)}<\dot{\gamma}(t),(k+ w(k))(t)>dt =  0 &
                                     \Leftrightarrow  k \in \linspan\{k_2(t),...,k_{2l-1}(t)\} \end{align*}
since $w:\ker(L) \ra \ker(L)^{\perp}$.
We now show that all geometrically distinct homoclinics can be found by considering $\mathfrak{g}$ with the coefficient of $\dot{\gamma}(t)$ fixed at zero. We do this by proving:
\begin{prop} Every solution
\[ \tilde{\gamma}(t) = \gamma(t) + (k + w(k))(t) \]
with $k$ sufficiently small, can also be expressed as
\begin{equation}  \tilde{\gamma}(t) = \gamma(t+ \xi) + (k^* + w(k^*))(t+\xi) \label{timetranslation} \end{equation}
with  $k^* \in \linspan\{k_2(t),...,k_{2l-1}(t)\}$.
\end{prop}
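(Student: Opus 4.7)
The plan is to exploit the time-translation invariance of the autonomous system (\ref{Hamsystem}) together with the implicit function theorem to choose a shift $\xi=\xi(k)$ which kills the $\dot\gamma$-component of the perturbation in the kernel decomposition.

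For $k\in\ker(L)$ of small norm, write $\tilde\gamma_k := \gamma+k+w(k)$ and define
\[ y(s;\xi,k) := \tilde\gamma_k(s-\xi) - \gamma(s) = \bigl(\gamma(s-\xi)-\gamma(s)\bigr) + (k+w(k))(s-\xi). \]
Whenever $\tilde\gamma_k$ is a solution of (\ref{Hamsystem}), autonomy gives that $s\mapsto \tilde\gamma_k(s-\xi)$ is another solution, so a direct computation shows $F(y(\cdot;\xi,k))=0$. Exponential decay of $\gamma$ together with $k+w(k)\in C^1_{-\beta}$ imply $y(\cdot;\xi,k)\in C^1_{-\beta}(\real,\real^{2n})$ with norm tending to zero as $(\xi,k)\to 0$. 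The local form of the Lyapunov--Schmidt decomposition established in Section \ref{Subsection:Reductiontoasingularity} then produces a unique $k'(\xi,k)\in\ker(L)$ such that $y(\cdot;\xi,k) = k'(\xi,k) + w(k'(\xi,k))$.

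I would then introduce
\[ \Phi(\xi,k) := \bigl\langle y(\cdot;\xi,k),\,\dot\gamma \bigr\rangle_\delta. \]
Because the chosen basis $\{\dot\gamma,k_2,\ldots,k_{2l+1}\}$ of $\ker(L)$ is $\langle\cdot,\cdot\rangle_\delta$-orthogonal and $w(k')\in\mathcal{M}=\ker(L)^\perp$, the condition $\Phi(\xi,k)=0$ is equivalent to $k'(\xi,k)\in\linspan\{k_2,\ldots,k_{2l+1}\}$. Differentiating under the integral gives $\Phi(0,0)=0$ and
\[ \partial_\xi\Phi(0,0) = -\int_\real e^{-2\delta\phi(t)}\|\dot\gamma(t)\|^2\,dt = -\|\dot\gamma\|_\delta^2 \neq 0, \]
since $\gamma$ is a nontrivial orbit, so the implicit function theorem yields a smooth $\xi(k)$ with $\xi(0)=0$ and $\Phi(\xi(k),k)=0$ for all sufficiently small $k$. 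Setting $k^*:=k'(\xi(k),k)\in\linspan\{k_2,\ldots,k_{2l+1}\}$ and substituting $s=t+\xi(k)$ in the identity $y(s;\xi(k),k)=(k^*+w(k^*))(s)$ delivers the representation (\ref{timetranslation}).

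The main technical point is the smoothness of the translation map $\xi\mapsto\tilde\gamma_k(\cdot-\xi)$ as a map $\real\to C^1_{-\beta}$, which is needed both to invoke the implicit function theorem and to differentiate $\Phi$ under the integral. This will follow from the fact that $\tilde\gamma_k$ is $C^2$ in $t$ (being a solution of the $C^3$ vector field $X_H$) and decays exponentially, so $\partial_\xi y = -\dot{\tilde\gamma}_k(\cdot-\xi)$ is well defined and continuous in $\xi$ in the weighted $C^0_{-\beta}$ topology.
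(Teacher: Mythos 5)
Your proposal is correct and follows essentially the same route as the paper: after the change of variables $s=t+\xi$, your functional $\Phi(\xi,k)$ coincides with the paper's $P(\tilde\gamma_k,\xi)$, the nondegeneracy condition $\partial_\xi\Phi(0,0)=-\|\dot\gamma\|_\delta^2\neq 0$ is the same, and both arguments conclude by identifying the shifted perturbation with $k^*+w(k^*)$ via the orthogonality of the kernel basis. Your added remark on the smoothness of the translation map is a reasonable technical supplement that the paper leaves implicit.
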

In other words, the homoclinics obtained with nonzero coefficients of $\dot{\gamma}(t)$ are only time translations of those obtained with the coefficient of $\dot{\gamma}(t)$ set to zero. The following proof uses ideas from \cite{Knobloch1995}.
\begin{proof}
We apply the implicit function theorem to the functional
\[ P: C^1_{-\beta} \times \real \ra \real, \:\: P(x,\xi) :=  \int_{\real} e^{-2\delta\phi(t+\xi)}<x(t) - \gamma(t+\xi),\dot{\gamma}(t+\xi)>dt. \]
We observe that
\begin{enumerate}
\item $P(\gamma,0) = 0.$
\item $D_{\xi}P(x,\xi)|_{(\gamma,0)} = - \int_{\real} e^{-2\delta\phi(t)}<\dot{\gamma}(t),\dot{\gamma}(t)>dt \neq 0. $
\end{enumerate}
So we can apply the IFT and write
\[ P(x,\xi) = 0 \Leftrightarrow \xi = \xi^*(x) \]
for $(x,\xi)$ in a neighbourhood of $(\gamma,0)$. Now, since in the expression of our homoclinic $ \tilde{\gamma}(t) $,  $k$ is sufficiently small, we have that $\tilde{\gamma}$ is close to $\gamma$, and so we can write
\begin{align} 0 =   P(\tilde{\gamma},\xi^*(\tilde{\gamma})) = & \int_{\real} e^{-2\delta\phi(t+\xi^*(\tilde{\gamma}))}<\tilde{\gamma}(t) - \gamma(t+\xi^*(\tilde{\gamma})),\dot{\gamma}(t+\xi^*(\tilde{\gamma}))>dt \nn \\
 =  &  \int_{\real} e^{-2\delta\phi(t)}<\tilde{\gamma}(t-\xi^*(\tilde{\gamma})) - \gamma(t),\dot{\gamma}(t)>dt \label{kstar} \end{align}
So, the term $z^*(t) = \tilde{\gamma}(t-\xi^*(\tilde{\gamma})) - \gamma(t)$ is small, and
\[ \tilde{\gamma}(t) = \gamma(t+ \xi^*(\tilde{\gamma})) + z^*(t+\xi^*(\tilde{\gamma})) \]
so that $z^* = k^* + w(k^*)$, and by (\ref{kstar}) we have $k^* \in \linspan\{k_2(t),...k_{2l-1}(t)\}$. Hence, we have found the $k^*$
from equation (\ref{timetranslation}), so the claim is proved.
\end{proof}

\begin{lemma} \label{Hessianintegrals}
 For $i,j \in \{2,...,2l+1 \}$, we have
\begin{align*}  \frac{\p^2 \mathfrak{g}}{\p k_i \p k_j} & = \int_{\real} \left< \dot{\gamma}(t),  d_{x}^3 H(\gamma(t))(k_i(t),k_j(t)\right>dt \\ & = \int_{\real}  \frac{d}{dt} \left<k_i(t),  d_{x}^2H(\gamma(t))(k_j) \right> dt
\end{align*}
\end{lemma}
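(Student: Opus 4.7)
The first equality is essentially the content of the previous lemma (equation \ref{reduced derivs 2}), specialised to indices $i,j \in \{2,\ldots,2l+1\}$, so nothing new is required there. The substance of the statement is therefore the second equality, which expresses the integrand as a total derivative. My plan is to establish it as a \emph{pointwise} identity rather than merely an integrated one, by expanding the total derivative via the product rule and exploiting the variational equation together with the skew-symmetry of $\Jay$.

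Concretely, I would expand
\[ \frac{d}{dt}\left< k_i(t), d_x^2 H(\gamma(t)) k_j(t)\right> = \left<\dot k_i(t), d_x^2 H(\gamma(t)) k_j(t)\right> + \left<\dot\gamma(t), d_x^3 H(\gamma(t))(k_i(t),k_j(t))\right> + \left< k_i(t), d_x^2 H(\gamma(t))\dot k_j(t)\right>, \]
where the middle term arises from differentiating $d_x^2 H(\gamma(t))$ along the curve $\gamma$ and invoking the symmetry of the trilinear form $d^3 H$. The middle term is precisely the quantity appearing in the first integral, so the task reduces to verifying that the sum of the two outer terms vanishes pointwise.

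For this, I would use that $k_i(t), k_j(t)$ lie in $\ker(L)$, hence satisfy the variational equation $\dot k(t) = DX_H(\gamma(t)) k(t) = \Jay\, d_x^2 H(\gamma(t)) k(t)$. Substituting, the two outer terms become
\[ \left<\Jay\, d_x^2 H\, k_i, d_x^2 H\, k_j\right> + \left<k_i, d_x^2 H\, \Jay\, d_x^2 H\, k_j\right>. \]
Using the symmetry of $d_x^2 H$ and the skew-symmetry $\Jay^T = -\Jay$, both terms can be rewritten as inner products of $d_x^2 H\, k_i$ with $\Jay\, d_x^2 H\, k_j$, but with opposite signs, so they cancel. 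This is essentially the familiar fact that the symplectic form is preserved along solutions of the linearised Hamiltonian flow.

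I do not expect any serious obstacle here: the computation is entirely algebraic, and the exponential decay of $k_i, k_j$ and $\gamma$ makes all integrals absolutely convergent, so the manipulations are justified without additional care at infinity. The main thing to keep straight is the bookkeeping of the tensor contractions, particularly in identifying $d_x^3 H(\gamma)(\dot\gamma, k_i, k_j)$ with $\langle \dot\gamma, d_x^3 H(\gamma)(k_i, k_j)\rangle$ via symmetry of the third derivative.
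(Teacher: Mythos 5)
Your proposal is correct and follows essentially the same route as the paper: expand $\frac{d}{dt}\left<k_i(t), d_x^2 H(\gamma(t))k_j(t)\right>$ by the product rule, identify the middle term with the integrand of the first integral, and cancel the two outer terms pointwise using the variational equation $\dot k = \Jay\, d_x^2 H(\gamma(t))k$ together with the symmetry of $d_x^2 H$ and the skew-symmetry of $\Jay$ (i.e.\ invariance of the symplectic form under the linearised flow). No meaningful differences from the paper's argument.
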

\begin{proof}
We observe that the integrand here can be written as
\[
 \left< \dot{\gamma}(t),  d_{x}^3 H(\gamma(t))(k_i(t),k_j(t))\right>  = \frac{d}{dt} \left<k_i(t),  d_{x}^2H(\gamma(t))(k_j) \right> \] \[ - \left< \dot{k_i}(t) , d_{x}^2 H(\gamma(t))(k_j(t))\right>   - \left< k_i(t), d_{x}^2 H(\gamma(t))( \dot{k_j}(t))\right>\]
But two of the terms on the right hand side here cancel out;
\begin{align*}    \left<\dot{k_i}(t), d_{x}^2 H(\gamma(t))(k_j(t))\right> & = -\left<\Jay d_{x}^2 H(\gamma(t))(k_i(t)), d_{x}^2 H(\gamma(t))(k_j(t)) \right> \\
& =  \omega(d_{x}^2 H(\gamma(t))(k_i(t)),d_{x}^2 H(\gamma(t))(k_j(t)) )
\end{align*}
and, since $d_{x}^2 H(\gamma(t))$ is symmetric,
\begin{align*}
\left< k_i(t), d_{x}^2 H(\gamma(t))( \dot{k_j}(t))\right> &  =  \left<\dot{k_j}(t), d_{x}^2 H(\gamma(t))(k_i(t))\right> \\
                                                                                                & =   \; \omega(d_{x}^2 H(\gamma(t))(k_j(t)),d_{x}^2 H(\gamma(t))(k_i(t)) )  \\
                                                                                                & =   - \omega(d_{x}^2 H(\gamma(t))(k_i(t)),d_{x}^2 H(\gamma(t))(k_j(t)) )
\end{align*}

since the symplectic form is skew-symmetric. Note that when $i=j$, both terms are zero.
\end{proof}
\begin{rem}
See also \cite{Blazquez-Sanz}, where similar calculations are
performed in a different bifurcation scenario.
\end{rem}

\section{The scattering matrix} \label{scatteringmatrix}

In order to evaluate the integrals from lemma \ref{Hessianintegrals} which define the elements of the Hessian matrix, we introduce the scattering matrix. This is a linear map defined on the centre subspace of the equilibrium which maps asymptotic initial conditions of the linearised variational equation from this sympectic subspace at negative infinity to their resting places in the same subspace at positive infinity, while accounting for the effects of the asymptotic motion in the center subspace. Since this map is defined using the (linear) Hamiltonian flow, and the space on which it is defined is symplectic, it is represented by a symplectic matrix. It is referred to as the \emph{scattering matrix}, and we call it $\sigma$. See also \cite{Lerman1996}, \cite{Yagasaki2000} and \cite{Yagasaki}. \\

Each $k(t) \in \linspan\{k_2, ... , k_{2l+1}\}$ approaches the orbit of a point in the center subspace as $t\ra\pm\infty$;
\[ \lim_{t\ra \pm\infty}k(t) = \Psi(t) k_{\pm\infty} \mbox{ with } k_{\pm\infty} \in E^c\]
with $\Psi(\cdot)$ denoting the fundamental matrix of the linear system on the center subspace $\dot{u} = -\Jay D^2H(0)|_{E^C}u(t)$. There is thus a family of $2l$-dimensional symplectic subspaces $Y(t) \subset T_{\gamma(t)}\real^{2n}$ $t\in\real$ spanned by the initial conditions $k_t$ such that $\Phi(s,t)k_t$ lies asymptotically in the center subspace $E^c$ at the equilibrium as $s \ra \pm \infty$. Let $\Phi^c(t,s) : Y(t) \ra Y(s)$, $s,t \in \real$ denote the restriction of the solution operator for the variational equation to these subspaces. Observing then that we can relate $k_{-\infty}$ to $k_{+\infty}$ via
\[ k_{+\infty} \: =  \bigg(  \lim_{t\ra\infty}\Psi(-t)\Phi(t,0)\bigg) \left(\lim_{t\ra -\infty} \Psi(-t)\Phi(t,0)\right)^{-1}k_{-\infty} \]
 we note that each of the limits in this definition exist:
\begin{prop}
The limits \[\lim_{t\ra\pm\infty} \Psi(-t)\Phi^c(t,0) \] exist and are nonsingular.
\end{prop}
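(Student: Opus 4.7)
The plan is to exploit the asymptotic autonomy of the variational equation along $\gamma(t)$. As $t\to\pm\infty$, $\gamma(t)\to 0$ and $DX_H(\gamma(t)) \ra DX_H(0)$, whose spectrum splits into the purely imaginary part (on $E^c$) and a hyperbolic part (on $E^s\oplus E^u$) with spectral gap at least $\alpha$. By the standard roughness/persistence theory for exponential dichotomies, the restriction of the variational equation to each half-line $[T,\infty)$ and $(-\infty,-T]$ inherits an invariant splitting asymptoting to the spectral splitting at the respective limit. Equivalently, one can invoke the strong-stable invariant foliation of $W^{cs}$ near the equilibrium, whose leaves contract at rate at least $e^{-\alpha' t}$ for some $0<\alpha'<\alpha$.

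For existence at $+\infty$, take $k_0\in Y(0)$. Using the foliation just described, decompose the solution for large $t$ as
\[ \Phi(t,0)k_0 \;=\; \Psi(t)\,k_{+\infty} \;+\; r(t), \qquad k_{+\infty}\in E^c, \]
where $r(t)$ lies in the strong-stable fiber over $\Psi(t)k_{+\infty}$ and hence satisfies $\|r(t)\|\leq C e^{-\alpha' t}$. The generator of $\Psi$ is $-\Jay D^2H(0)|_{E^c}$, which by the earlier symplectic normalization has distinct purely imaginary eigenvalues and is therefore diagonalizable, so $\Psi(\pm t)$ is uniformly bounded in $t$. Consequently
\[ \Psi(-t)\Phi(t,0)k_0 \;=\; k_{+\infty} \;+\; \Psi(-t)r(t) \;\ra\; k_{+\infty} \]
exponentially fast. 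The argument at $-\infty$ is symmetric, using the strong-unstable foliation of $W^{cu}$.

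For nonsingularity, the limit defines a linear map $\mathcal{T}_+:Y(0)\ra E^c$, $k_0\mapsto k_{+\infty}$, between $2l$-dimensional spaces, so it suffices to establish injectivity. If $k_{+\infty}=0$ then $\Phi(t,0)k_0=r(t)\ra 0$ exponentially, placing $k_0$ in the tangent space $T_{\gamma(0)}W^s$. Since by definition $Y(0)\subset T_{\gamma(0)}W^{cs}\cap T_{\gamma(0)}W^{cu}$, this gives $k_0\in T_{\gamma(0)}W^{cu}\cap T_{\gamma(0)}W^s$, which by assumption \ref{transass} is $\linspan\{\dot{\gamma}(0)\}$. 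Noting that $T_{\gamma(0)}W^{cs}\cap T_{\gamma(0)}W^{cu}$ has dimension $2l+1$ (the initial-condition space of $\ker(L)$, consistent with the Fredholm index) and $Y(0)$ is taken to be a $2l$-dimensional complement of $\linspan\{\dot{\gamma}(0)\}$ inside it, we conclude $k_0=0$. The symmetric argument at $-\infty$, using the second half of assumption \ref{transass}, gives injectivity of $\mathcal{T}_-$.

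The main obstacle is the rigorous setup of the decomposition $\Phi(t,0)k_0=\Psi(t)k_{+\infty}+r(t)$ with controlled exponential remainder. The cleanest route is to apply a half-line version of the Palmer-type theorem already invoked in this paper: the shifted operator on $[T,\infty)$ inherits an exponential dichotomy by roughness, and the strong-stable subbundle produced this way is precisely what allows one to define $k_{+\infty}$ as the $E^c$-component in the limit. Once this is in place, the remaining steps reduce to boundedness of $\Psi$ and elementary dimension-counting.
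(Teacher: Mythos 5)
Your strategy is sound but genuinely different from the paper's, and the place where you defer to outside machinery is exactly where the paper does its real work. The paper restricts the variational equation to the bundle $Y^c(t)$, views it as the autonomous center equation plus a perturbation $M(t)=O(e^{-\lambda t})$, and constructs by an explicit contraction mapping (the operator $T_{t^*,j}\phi = e^{\lambda_j t}p_j - \int_t^\infty e^{DX_H(0)|_{E^c}(t-s)}M(s)\phi(s)\,ds$ on $C([t^*,\infty),\real^{2l})$) a full fundamental system of solutions asymptotic to the eigensolutions $e^{\lambda_j t}p_j$. This is a Levinson-type asymptotic integration argument, and it yields both conclusions at once: the limit $\Psi(-t)\tilde\Phi(t)\to P$ exists because each column converges, and it is nonsingular because $P$ is the (invertible) matrix of eigenvectors, so $\lim\Psi(-t)\Phi^c(t,0)=P\tilde P$ with both factors invertible. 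You instead posit the decomposition $\Phi(t,0)k_0=\Psi(t)k_{+\infty}+r(t)$ with $\|r(t)\|\le Ce^{-\alpha' t}$ and derive existence from boundedness of $\Psi(\pm t)$; but note that a dichotomy or roughness theorem by itself gives you an invariant splitting, not the existence of the asymptotic phase $k_{+\infty}$ — identifying a specific solution of the limiting autonomous equation that the center component tracks requires precisely the integrable decay of $M(t)$ and an argument like the paper's fixed point. So the "main obstacle" you flag at the end is not a routine citation; it is the content of the proof.

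Your nonsingularity argument is a legitimate alternative and arguably more illuminating: injectivity via the observation that $k_{+\infty}=0$ forces exponential decay, hence $k_0\in T_{\gamma(0)}W^s\cap T_{\gamma(0)}W^{cu}=\linspan\{\dot\gamma(0)\}$ by Assumption \ref{transass}, while $Y(0)$, being a $2l$-dimensional symplectic complement of $\dot\gamma(0)$ inside the $(2l+1)$-dimensional intersection, meets that span trivially. This makes the role of the transversality assumption explicit, whereas in the paper's proof invertibility falls out of the construction of a complete fundamental system on the $2l$-dimensional bundle (the assumption being absorbed into the very definition of $Y(t)$). Each approach buys something: the paper's is self-contained and quantitative; yours separates the soft structure (foliation, dimension count) from the hard estimate, at the cost of leaving that estimate unproved. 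To make your proof complete, replace the appeal to a half-line Palmer theorem by the explicit contraction argument, or at least prove that the center component of the dichotomy splitting converges to a genuine solution of the limiting equation.
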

\begin{proof}
We write
\begin{align*} \dot{y} & =  DX_H(0)|_{E^c}y(t) + \left( DX_H(\gamma(t))|_{Y^c(t)} - DX_H(0)|_{E^c} \right) y(t) \\
                                     & =:   DX_H(0)|_{E^c}y(t) + M(t)y(t) \end{align*}
noting that $M(t) =O(e^{-\lambda t})$ for $0<\lambda<\alpha$ (with $\alpha$ being the minimum of the real parts of the hyperbolic eigenvalues of the linearisation at the origin) as a consequence of the exponential convergence of the homoclinic orbit $\gamma(t)$ to the origin. We find solutions $\tilde{\phi}_j(t)$ such that
\[ \lim_{t\ra\infty} \tilde{\phi}_j(t) e^{-\lambda_j t} = p_j \]
where $DX_H(0)|_{E^c}p_j = \lambda_j p_j$ for each $p_j$. The $\tilde{\phi}_j(t)$ are found as fixed points of an operator $T_{t^*, j}$ mapping from the space of bounded continuous functions on the interval $[t^*,\infty)$, $ C([t^*,\infty),\real^{2l})$ with the supremum norm $|\cdot|_{\infty}$, into itself. We show that for $t^*$ sufficiently large, each $T_{t^*,j}$ is a contraction. The  $T_{t^*,j}$ are defined by
\[ T_{t*,j}(\phi(t)) = e^{\lambda_j t}p_j - \int_{t}^{\infty} e^{DX_H(0)|_{E^c}(t-s)}M(s)\phi(s)ds \]
We have:
\begin{align*} \| T_{t^*, j} \phi_1(t) - T_{t^*,j} \phi_2(t) \| & \leq |\phi_1(t) - \phi_2(t)|_\infty \int_{t}^{\infty} \| e^{DX_H(0)|_{E^c}(t-s)}\| Ce^{-\lambda s}ds \\
 & \leq  |\phi_1(t) - \phi_2(t)|_\infty CC_1 \frac{e^{-\lambda t^*}}{\lambda} \end{align*}
so this is a contraction for $t^*$ large enough, for each $j \in \{1,...,2l\}$. Using this approach for each $j$, we can build a fundamental matrix $\tilde{\Phi}(t) = \left(  \tilde{\phi}_1(t) \bigg| \: ... \:\bigg|  \tilde{\phi}_{2l}(t) \right)$ (that is, using the  $\tilde{\phi}_j(t)$ as columns), so that
\[ \lim_{t\ra\infty} \tilde{\Phi}(t) = \left( e^{t DX_H(0)|_{E^c}}p_1 \bigg| \: ... \: \bigg|  e^{t DX_H(0)|_{E^c}}p_{2l} \right) \]
which implies
\[ \lim_{t\ra\infty} \Psi(-t)\tilde{\Phi}(t) = P, \]
where $\det(P) \neq 0$. Now we can return to our original fundamental via $\Phi(t,0) = \tilde{\Phi}(t)\tilde{P}$ for a nonsingular matrix $\tilde{P}$. We conclude
\[ \lim_{t \ra \infty} \Psi(-t)\Phi(t,0) = P\tilde{P} \]
which is nonsingular. A similar argument holds in negative time.
\end{proof}

We then define the scattering matrix $\sigma : E^c \ra E^c$ by
\begin{equation} \sigma := \lim_{t\ra\infty} \Psi(-t)\Phi^c(t,-t)\Psi(-t). \label{sigmadefinition} \end{equation}
Thus, since $\Psi(t)$ is orthogonal and commutes with $D^2H(0)$, we have
\begin{align*} &   \lim_{t\ra +\infty} \left< D^2H(\gamma(t))k_i(t), k_j(t) \right> -  \lim_{t\ra-\infty} \left< D^2H(\gamma(t)) k_i(t),  k_j(t) \right>\\
   & =   \left< D^2H(0)\Psi(t) k_{i,+\infty}, \Psi(t) k_{j,+\infty} \right>  -  \left< D^2 H(0) \Psi(t) k_{i,-\infty}, \Psi(t) k_{j,-\infty} \right>  \\
   & =  \left< D^2H(0) k_{i,+\infty}, k_{j,+\infty} \right> -  \left< D^2 H(0) k_{i,-\infty}, k_{j,-\infty} \right> \end{align*}

which, together with the expression (\ref{reduced derivs 2}) leads to the following representation of the Hessian, concluding the proof of  theorem \ref{LSreductionthm}:
\begin{equation} D^2\mathfrak{g} =  \sigma^T D^2 H(0)|_{E^c} \sigma -D^2 H(0)|_{E^c}. \label{sigmahessian} \end{equation}

\subsection{Indefiniteness of the Hessian}
In this subsection we prove part $(i)$ of theorem \ref{mainthm}. The argument uses the classical minimax principle (see \cite{CHbook}), which states that
given a symmetric $(n\times n)$ matrix $A$ with the eigenvalues $\lambda_i$ ordered so that $\lambda_i\leq \lambda_{i+1}$, $i=1,\dots,n-1$,
$$\min_{\mathcal{R}} \max_{\|v\|=1,v\in \mathcal{R}} \langle v, Av \rangle = \lambda_k$$
where $\mathcal R$ runs all $(n+1-k)$-dimensional linear subspaces.
Combining it with the linear nonsqueezing theorem \cite{Mcduff1998}, we show that the most negative and most positive eigenvalues of $\sigma^T D^2H(0)|_{E^c}\sigma$ cannot be closer to zero than those of $D^2H(0)|_{E^c}$, which implies that $D^2\mathfrak{g}$ must be indefinite. Hence, if it is invertible, it can't have the signature $(0,2l)$ or $(2l,0)$. Recall, we assume (without loss of generality) that the matrix $D^2H(0)|_{E^c}$ takes the form  $D^2H(0)|_{E^c} = \diag(\omega_1,...,\omega_l,\omega_1,...,\omega_l)$.

\begin{proof}[Proof of theorem \ref{mainthm} $(i)$]
Seeking a contradiction, we assume that $G=D^2\mathfrak{g}$ is positive definite. This implies that the eigenvalues $\lambda_i$ of the symmetric matrix $\sigma^T D^2H(0)\sigma$ (ordered in increasing size) are larger than those of $D^2H(0)$. That is, they satisfy\footnote{This fact itself can also be proved using the minimax principle}
\[ \begin{cases}  \lambda_1, \lambda_2 & > \omega_1 \\... & \\ ... & \\ \lambda_{2l-1}, \lambda_{2l} & > \omega_l \end{cases} \]
We now consider the minimax principle for the first eigenvalue $\lambda_1$ of $\sigma^T D^2 H(0) \sigma$, which states;
\begin{equation} \label{minimax} \omega_1 < \lambda_1 = \min \{ \max \left< D^2H(0)\sigma v,\sigma v \right>  | \|v\| = 1, v \in U, \mbox{ $U$ subspace with } \dim(U) = 1 \}. \end{equation}
The $2$-dimensional symplectic eigenspace of $D^2H(0)$ associated with $\omega_1$ is $E_{\omega_1} = \linspan\{ q_j,p_j\}$ for some $j \in \{1,...,l\}$. Consider now the symplectic subspace $\sigma^{-1}(E_{\omega_1})$. By the linear version of Gromov's nonsqueezing theorem (see e.g.\cite{Mcduff1998}), the unit ball in $\real^{2l}$ cannot be mapped into the cylinder $C_r(q_j,p_j) = \{ (q,p) | q_j^2 + p_j^2 \leq r^2 \}$ for $r^2 < 1$, so either $\| \sigma v \| = 1$ for all $v \in \{\sigma^{-1}(E_{\omega_1})| \|v\| = 1\}$,
or there exist $v_+, v_- \in \{\sigma^{-1}(E_{\omega_1})| \|v\| = 1\}$ such that
\[ \| \sigma v_+ \| > 1,  \:\: \| \sigma v_- \| < 1. \]
In either case, we arrive at a contradiction to the statement (\ref{minimax}) of the minimax principle: in the former we can take any $v$ from $\{\sigma^{-1}(E_{\omega_1})| \|v\| = 1\}$ to get $\lambda_1 = \omega_1$, and in the latter we can take $v_-$ if $\omega_1 > 0$ or $v_+$ if $\omega_1 < 0$ to arrive at $\lambda_1 < \omega_1$.

If we assume instead that $G$ is negative definite, we can consider the minimax principle for the largest eigenvalue $\lambda_{2l}$, which in this case will give
\begin{equation} \label{maximin} \omega_l > \lambda_{2l} = \max \{ \min \left< D^2H(0)\sigma v,\sigma v \right>  | \|v\| = 1, v \in U, \mbox{ $U$ subspace with } \dim(U) = 1 \}. \end{equation}
A similar argument to the one above then yields $\omega_l \leq \lambda_{2l}$, the required contradiction.
\end{proof}

In the case of the smallest eigenvalue, the `max' in the minimax principle is redundant (likewise for the `min' for the largest eigenvalue). For other eigenvalues however, these elements come into play, meaning that in general the argument cannot be repeated to rule out other signatures.

\section{Near-integrable systems, and near-identity scattering matrices} \label{Section:Nearidentity}
 \begin{defn} A symplectic rotation is a real symplectic matrix $R_{\theta} = [r_{i,j}] \in Sp(2n,\real)$ with $\theta = (\theta_1,...,\theta_n) \in \real^n$ such that for each $i \in \{1,...,n\}$,
\[ \left( \begin{array}{cc} r_{i,i} & r_{i,n+i} \\ r_{n+i,i} & r_{n+i,n+i} \end{array} \right) =
\left( \begin{array}{cc} \cos \theta_i & \sin \theta_i \\ -\sin \theta_i & \cos \theta_i \end{array} \right)\]
 and $r_{i,j} = 0$ otherwise.
\end{defn}
So $R_\theta$ acts by a rotation through an angle $\theta_i$ in each pair of conjugate directions $(x_i, x_{n+i})$.

\begin{rem} For our considerations, the scattering matrix $\sigma$ is only determined up to left multiplication by a symplectic rotation,
since
\[ \left< D^2H(0)|_{E^c} R_{\theta} \sigma k_l , R_{\theta} \sigma k_m \right> = \left<R_{\theta} D^2H(0)|_{E^c}\sigma k_l , R_{\theta}  \sigma k_m \right>  =  \left< D^2H(0)|_{E^c}\sigma k_l ,  \sigma k_m \right>\]
Hence, considering the form (\ref{sigmahessian}) of the Hessian of our reduced function $\mathfrak g$,
we see that two scattering matrices $\sigma$ and $R_{\theta}\sigma$ are equivalent in the sense that they yield the same Hessian matrix.
\end{rem}

It is easy to build an example of an integrable system with a homoclinic loop. Consider a Hamiltonian $H_0$  of the form
\begin{align} H_0(q_1,...,q_l,p_1,...,p_l,x_1,...,x_{n-l},y_1,...,y_{n-l}) & = \\ h_c(q_1,...,q_l, p_1,...,p_l) \nn &
& \hspace{-1cm}+ h_s(x_1,...,x_{n-l},y_1,...,y_{n-l}), \end{align}
where the quadratic part of $h_c$ is $h_{c,2} = \sum_{i+1}^{l}\frac{\omega_i}{2}(q_i^2 + p_i^2)$, with each $\omega_i \in \mathbb{R}$ distinct. Let the $(n-l)$ degree of freedom Hamiltonian vector field given by $h_s$ have a hyperbolic equilibrium at the origin with a nondegenerate homoclinic orbit $\gamma_0(t)$, that is, a homoclinic along which the intersection of the tangent spaces to the stable and  unstable manifolds is one-dimensional. The orbit $\gamma_0(t)$ is a homoclinic loop of the system $X_{H_0}$, contained in the subspace $ \{ (\bf{q},\bf{p}) = 0 \} $. It is straightforward to see from the product structure of the system and the nondegeneracy of $\gamma_0(t)$ that the vector field $X_{H_0}$ satisfies the transversality assumption $\ref{transass}$ on the invariant manifolds. It is also straightforward to see from the product structure of the system and the nondegeneracy of $\gamma_0(t)$ that the vector field $X_{H_0}$ satisfies the transversality assumption $\ref{transass}$ on the invariant manifolds.

Note that the Hamiltonian $H_0$ can be chosen to be completely integrable. For instance, we could take
\begin{align} h_c(q_1,...,q_l,p_1,...,p_l) & =  \sum_{i=1}^{l}\frac{\omega_i}{2}(q_i^2 + p_i^2) \label{hc} \\
 h_s(x_1,...,x_{n-l},y_1,...,y_{n-l}) & = \frac{y_1^2}{2} - \frac{x_1^2}{2} + \frac{x_1^3}{3} + \sum_{i=2}^{n-l} \frac{\alpha_i}{2}(y_i^2 - x_i^2) \label{hs} \end{align}
with $\alpha_i \in \real$. This leads to a system which has a homoclinic loop in the $(x_1,y_1)$ plane given by $\gamma_0(t) = (g(t),\dot{g}(t))$, $g(t) = \frac{3}{2}\mbox{sech}^2(\frac{t}{2})$, $t\in\real$, and $n$ first integrals $H_0$, $\xi_1,...\xi_l$, $\eta_2,...\eta_{n-l}$ where
$\xi_i = \frac{\omega_i}{2}(q_i^2 + p_i^2)$ and $\eta_i =  \frac{\alpha_i}{2}(y_i^2 - x_i^2)$. These first integrals commute with respect to the standard Poisson bracket $\{f_1,f_2\}(\cdot) = \omega (X_{f_1}(\cdot),X_{f_2}(\cdot))$.
\begin{prop}
The scattering matrix of the orbit $\gamma_0(t)$ in the system given by $X_{H_0}$ as defined above, is the identity.
\end{prop}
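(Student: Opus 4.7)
The plan is to exploit the product structure of $H_0$ to show that the variational equation on the center subspace along $\gamma_0$ is \emph{exactly} the linearized flow at the origin, so that $\Phi^c(t,s)$ literally coincides with $\Psi(t-s)$; plugging this into the definition of $\sigma$ then gives the identity by the group property of $\Psi$.

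More precisely, the first step is to observe that since $H_0(q,p,x,y) = h_c(q,p) + h_s(x,y)$ has no cross terms in $(q,p)$ and $(x,y)$, its Hessian is block diagonal with respect to the symplectic splitting $\mathbb{R}^{2n} = E^c \oplus (E^u \oplus E^s)$. Hence $DX_{H_0} = \Jay \, D^2 H_0$ is also block diagonal at every point, and in particular along $\gamma_0(t)$ the variational equation decouples. Next, because $\gamma_0(t)$ lies in the invariant subspace $\{(q,p)=0\}$ on which $h_c$ is \emph{quadratic} (the higher order corrections to $h_c$ vanish since $h_c$ is evaluated at the origin of its variables), we have
\[
DX_{H_0}(\gamma_0(t))\bigl|_{E^c} \;=\; DX_{h_c}(0) \;=\; \Jay_1 D^2 h_c(0),
\]
which is constant in $t$. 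Therefore the solution operator on $E^c$ is $\Phi^c(t,s) = \Psi(t-s)$, where $\Psi(\cdot)$ is precisely the fundamental matrix of the center-subspace linear system appearing in the definition of $\sigma$.

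The final step is a one-line computation: using the group law $\Psi(a)\Psi(b) = \Psi(a+b)$ and $\Psi(0) = I$,
\[
\sigma \;=\; \lim_{t\to\infty} \Psi(-t)\,\Phi^c(t,-t)\,\Psi(-t)
       \;=\; \lim_{t\to\infty} \Psi(-t)\,\Psi(2t)\,\Psi(-t)
       \;=\; \Psi(0) \;=\; I.
\]

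There is no real obstacle here; the whole content is the decoupling observation in step one, and the fact that on $E^c$ the linearization along $\gamma_0$ carries no time-dependent correction (which uses both that $\gamma_0$ lies in $\{(q,p)=0\}$ and that the $(q,p)$-piece of $H_0$ is entirely contained in $h_c$, so no hyperbolic-center coupling is generated). The equally natural verification via the equivalent expression $\sigma = \lim_{t\to\infty}\Psi(-t)\Phi^c(t,0)\cdot[\lim_{t\to-\infty}\Psi(-t)\Phi^c(t,0)]^{-1}$ gives the same answer, since each of the two limits individually equals $I$ in the decoupled case.
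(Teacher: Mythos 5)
Your proof is correct and follows essentially the same route as the paper: both exploit the block-diagonal (product) structure of $D^2H_0$ along $\gamma_0\subset\{(q,p)=0\}$ to see that the center part of the variational equation has the constant coefficient matrix $\Jay_1 D^2h_c(0)$, so that $\Phi^c(t,s)=\Psi(t-s)$ and the limit defining $\sigma$ collapses to $\Psi(0)=I$. The paper merely phrases the same computation in terms of the symplectic rotations $R_{t\omega}$.
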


\begin{proof}

In our case the variational equation along $\gamma_0(t)$ takes the form

\[ \begin{pmatrix} \dot{\bf{q}} \\ \dot{\bf{p}} \end{pmatrix} = \Jay_{l} D^2 h_{c}(0) \begin{pmatrix} \bf{q} \\ \bf{p} \end{pmatrix} \:\: , \:\:
\begin{pmatrix} \dot{\bf{x}} \\ \dot{\bf{y}} \end{pmatrix} = \Jay_{n-l} D^2 h_{s}(\gamma_0(t)) \begin{pmatrix} \bf{x} \\ \bf{y} \end{pmatrix}. \]
The $2l$-dimensional $ (\bf{q},\bf{p})$ subsystem has constant coefficients and the fundamental matrix is a symplectic rotation, $R_{t\bf{\omega}}$ , where $\bf{\omega} = (\omega_1,...,\omega_l)$. The $ (\bf{x},\bf{y})$ subsystem has only one bounded solution on $\real$ (as a consequence of the nondegeneracy of $\gamma_0(t)$); it is given by $\dot{\gamma_0}(t)$. For the scattering matrix, we find $\lim_{t \ra \infty}R_{-t\bf{\omega}}R_{2t\bf{\omega}}R_{-t\bf{\omega}} = I$.
\end{proof}

In general, we do not expect that the scattering matrix of a completely integrable system is identity. However, the integrable flow preserves the value of integrals,
so the linear map defined by the scattering matrix on the center manifold must preserve the linearized actions, i.e. it has to be a symplectic rotation for an appropriate
choice of coordinate system on the tangent space to $W^c$ (use the action-angle variables for the linearized system on $W^c$ as the polar coordinates).\\

Now, let us consider what kind of scattering matrices can appear at a small perturbation of an integrable system. We, first, consider perturbations which are localised
near $\gamma_0(t)$ on a finite time interval $[-T,T]$, for some $T >0$. We write the scattering matrix as a composition of symplectic matrices which represent the linear flow in the central subspace $Y(t)$ on $ [-\infty, -T]$, $[-T,T]$ and $[T,\infty]$ respectively;
$$\sigma = \lim_{t\ra+\infty} \Psi(-t) \Phi^c(t,T) \circ \Phi^c(T,-T) \circ \lim_{t\ra+\infty} \Phi^c(-T, -t) \Psi(-t).$$

We can always choose symplectic coordinates in $Y(T)$ and $Y(-T)$ such that $\lim_{t\ra+\infty} \Phi^c(-T,-t) \Psi(-t)=id$ and $\Phi^c(T,-T)=id$, so that
\begin{equation} \sigma = \lim_{t\ra+\infty} \Psi(-t) \Phi^c(t, T).\label{1f}\end{equation}
When we add a perturbation localised strictly inside a neighbourhood of $\{\gamma_0(t), t\in[-T,T]\}$, this would result to a small perturbation to
$\Phi^c(T,-T)$ only. Thus, the scattering matrix for the perturbed system will take the form
 \begin{equation}  \sigma= \sigma_0 \Phi^c(T,-T) \label{productsigma} \end{equation}
where $\sigma_0$ is the scattering matrix for the unperturbed system. Let us show that the localised perturbation can be chosen in such a way that
$\Phi^c(-T,T)$ will become any given symplectic matrix close to identity.

Indeed, take a small affine cross-section $\Sigma$ through $\gamma_0(-T)$, such that it would contain the central subspace $Y(-T)$.
Let $\mathcal U$ be the union of all forward orbits of length $2T$
over all initial points in $\Sigma$. The set $\mathcal U$ is foliated by the level sets of $H$
which are smooth manifolds of codimension one, invariant with respect to the flow maps $\varphi_\tau$. The maps $\varphi_\tau$ are symplectic and preserve $H$, which means we can always introduce symplectic $C^k$-coordinates\footnote{$k$ is the smoothness of the system, so we assume here that the Hamiltonian $H$ is at least of class $C^{k+1}$}
$(E,t,z)$ in $\mathcal U$ such that $E$ is the value of the Hamiltonian $H$, the coordinate $t\in[-T,T]$ equals to the time it takes for the point to get back to $\Sigma$,
and $z$ stays constant along the orbits, i.e. $(E,t,z)=\varphi_t(E,0,z)$. One can check that the symplectic form in $\mathcal U$ is given by
$dE\bigwedge dt + dz\bigwedge \Jay dz$.

In these coordinates the map $\bar\varphi=\varphi_{2T}$ is identity, so the map $\Phi^c(T,-T)$ obtained by the restriction of the derivative of $\varphi_{2T}$
at the point $\gamma_0(-T)$ to the central subspace $Y(-T)$ is also an identity, as required for formula (\ref{1f}) to be true. Let $C$ be a
symmetric matrix, $\mu\in R^{2n-2}$ be small, and $\varepsilon$ run a small interval of $R^1$ around zero. Consider a family of perturbed Hamiltonians $H_{\varepsilon,C,\mu}$ defined as follows:
\begin{equation}\label{pertham}
H_{\varepsilon,C,\mu}=E+\frac{\varepsilon}{2}\xi(t)\eta(E,z) \langle z, C z\rangle - \xi(t)\eta(E,z)\langle \mu , z\rangle
\end{equation}
where $\xi$ is localised strictly inside the interval $[-T,T]$,
$\eta$ is localized in a small neighbourhood of zero, $\eta=1$ for all $(z,E)$ close enough to zero,
and $\int_{-T}^T \xi(t)dt=1$. At $(z,E)$ close to zero the equations of motion by the perturbed Hamiltonian are given by
$$\dot t=1, \qquad \dot z= -\varepsilon \xi(t) \Jay C z + \mu.$$
These equations immediately imply
\begin{equation}\label{permap}
\bar\varphi_{\varepsilon,C}(0)=\mu(1+\mathcal{O}(\varepsilon)), \qquad \frac{\partial \bar\varphi_{\varepsilon,C}}{\partial x}(0)=\exp(-\varepsilon \Jay C).
\end{equation}
As we see, the solution $z=0$ of the unpertubed system that corresponds to the homoclinic loop $\gamma_0(t)$ persists in the perturbed system if $\mu=0$, i.e. the homoclinic
loop persists. Thus, if we denote by $B$ the restriction of the matrix $C$ to the center subspace $Y(-T)$, then
by (\ref{permap}),(\ref{productsigma}), we obtain the following result.

\begin{prop} \label{proppert}
Given any symmetric matrix $B$ there exists a family of perturbed Hamiltonian $H_{\varepsilon,C}$ such that $H_{0,C}\equiv H_0$, and for all small
$\varepsilon$ the system defined by $H_{\varepsilon,C}$ has a homoclinic loop to the elliptic-hyperbolic equilibrium $O$ with the corresponding scattering matrix $\sigma_{\varepsilon,C}$ equal to
\begin{equation}\label{sprf}
\sigma_{\varepsilon,C}=\sigma_0\exp(-\varepsilon \Jay B).
\end{equation}
\end{prop}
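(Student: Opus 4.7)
The plan is to combine the coordinate normal form $(E,t,z)$ already constructed on $\mathcal U$ with the explicit family (\ref{pertham}), and then simply read off the scattering matrix from the factorization (\ref{productsigma}) and the linearization formula (\ref{permap}).

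First I would set $\mu=0$ in (\ref{pertham}) and extend the given symmetric matrix $B$ on $Y(-T)$ to a symmetric matrix $C$ on the full $z$-space, for example by zero-padding in a symplectic frame in which $Y(-T)$ appears as a coordinate block. With $\mu=0$ the $z$-equation $\dot z=-\varepsilon\xi(t)\Jay C z$ has $z=0$ as a solution, so the portion of $\gamma_0(t)$ lying inside $\mathcal U$ remains an orbit of $X_{H_{\varepsilon,C,0}}$; outside $\mathcal U$ the perturbation vanishes identically by the supports of $\xi$ and $\eta$. Hence the entire homoclinic loop $\gamma_0(t)$ persists in the perturbed system and the elliptic--hyperbolic structure at the origin is unchanged, so the scattering matrix $\sigma_{\varepsilon,C}$ of the perturbed system is defined and all hypotheses of the preceding sections remain in force.

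Next I would compute $\Phi^c(T,-T)$ for the perturbed flow. Since the perturbation is supported inside $\mathcal U$ and $(E,t,z)$ are symplectic with the homoclinic corresponding to $z=0$, the second identity in (\ref{permap}) gives the derivative of the flow from $t=-T$ to $t=T$ at $z=0$ as $\exp(-\varepsilon\Jay C)$. Restricting to $Y(-T)$ yields $\exp(-\varepsilon\Jay B)$; in the unperturbed system the same restricted derivative equals the identity, by the very choice of coordinates that produced (\ref{1f}). The factorization (\ref{productsigma}) — valid because the asymptotic factors $\lim_{t\to+\infty}\Psi(-t)\Phi^c(t,T)$ and $\lim_{t\to+\infty}\Phi^c(-T,-t)\Psi(-t)$ are untouched, the perturbation being zero outside $[-T,T]\times\{\text{neighbourhood of }z=0\}$ — then gives
\[ \sigma_{\varepsilon,C}=\sigma_0\exp(-\varepsilon\Jay B), \]
which is exactly (\ref{sprf}).

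The only real point to watch is the bookkeeping of supports, namely that $\xi$ lives strictly inside $[-T,T]$ and $\eta$ inside a neighbourhood of $z=0$ so that the linearized flow along $\gamma_0$ outside the interval $[-T,T]$ is genuinely unchanged; this is what legitimises inheriting $\sigma_0$ as the left-hand factor. One should additionally note that $\exp(-\varepsilon\Jay B)$ is symplectic because $\Jay B$ is infinitesimally symplectic (a consequence of $B=B^T$), so the product $\sigma_{\varepsilon,C}$ is indeed a symplectic matrix, as required. No further analytic input is needed beyond the normal form and the perturbation calculation already supplied in the preceding paragraphs.
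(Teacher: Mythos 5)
Your proposal is correct and follows essentially the same route as the paper: you use the normal-form coordinates $(E,t,z)$ on $\mathcal U$, the localised family (\ref{pertham}) with $\mu=0$, the linearization (\ref{permap}) giving $\Phi^c(T,-T)=\exp(-\varepsilon \Jay B)$ on $Y(-T)$, and the factorization (\ref{productsigma}). The only (harmless) difference is one of direction --- you extend the given $B$ to a block-diagonal $C$ by zero-padding, whereas the paper phrases $B$ as the restriction of a chosen $C$ --- and your added remarks on the support bookkeeping and the symplecticity of $\exp(-\varepsilon \Jay B)$ are consistent with the paper's argument.
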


Recall that any symplectic matrix which is sufficiently close to the identity can be expressed as $\exp(\Jay B)$ with a small symmetric matrix $B$ (see eg. \cite{Gosson2006}). Thus, we have shown that arbitrary symplectic perturbation of the scattering matrix can be achieved by a small perturbation of the Hamiltonian $H_0$.

So far, the perturbations we considered were localised in a bounded domain, so they were not-analytic (of class $C^k$ if the non-perturbed Hamiltonian is $C^{k+1}$). However,
the statement of Proposition (\ref{proppert}) carries over to the analytic case as well. To see this, if the original Hamiltonian $H_0$ is analytic, consider an analytic family of perturbations $\hat H_{\varepsilon,C,\mu}$ which is at least $C^4$-close to the family of $C^4$-smooth localised Hamiltonians $H_{\varepsilon,C,\mu}$ built above.
By (\ref{permap}), the splitting of the separatrix loop in the family $H_{\varepsilon,C,\mu}$ is controlled by the parameter $\mu$. The same is true for any smooth approximation of this family (as the stable and unstable manifolds of $O$ depend continuously on the system, i.e. $C^2$-small changes in the Hamiltonian lead to small changes in the position of the stable and unstable manifolds), e.g. for the family $\hat H_{\varepsilon,C,\mu}$. This means,
in particular, that we can find $\mu(\varepsilon,C)$ smoothly depending on $C$ and $\varepsilon$ such that for all $\varepsilon$ and $C$ under consideration
the system defined by the Hamiltonian $\hat H_{\varepsilon,C}=\hat H_{\varepsilon,C,\mu(\varepsilon,C)}$ will have a homoclinic loop close to $\gamma_0$, and this loop
will analytically depend on $\varepsilon$ and $C$. As the family $\hat H_{\varepsilon,C}$ is at least $C^4$-close to the family $H_{\varepsilon,C,0}$, the corresponding
family of scattering matrices $\hat\sigma_{\varepsilon,C}$ will be close (as a smooth family) to the family $\sigma_{\varepsilon,C}$ defined by (\ref{sprf}). As the range
of possible values for $\sigma_{\varepsilon,C}$ covers all symplectic matrices close to $\sigma_0$, the same holds true for $\hat\sigma_{\varepsilon,C}$. Thus,
Proposition \ref{proppert} holds true for a family of analytic perturbations if $H_0$ is analytic.\\

As $\sigma_0$ is a symplectic rotation for an integrable system, and multiplication of the scattering matrix to a symplectic rotation does not
change the Hessian matrix $G=D^2\mathfrak{g}$, we obtain from (\ref{sprf}) that all Hessians $G$ that correspond to all possible near identity scattering matrices
can be realised by a small analytic perturbation of any given integrable system with a non-degenerate homoclinic loop to an elliptic-hyperbolic equilibrium.

\section{All indefinite signatures are possible} \label{Section:Allindef}

In light of the previous section, we now investigate the case in which the scattering matrix is a near identity symplectic transformation, which can be expressed as the flow along a Hamiltonian vector field. This means that we can write
\begin{equation} \sigma = \exp(-\varepsilon \Jay B) = I - \varepsilon \Jay B + \mathcal{O}(\varepsilon^2) \label{nearidentity} \end{equation}
with $\varepsilon << 1$ and $B$ an arbitrary symmetric matrix. Substituting the form (\ref{nearidentity}) into the expression (\ref{sigmahessian}) yields;
\[  \frac{1}{\varepsilon}\frac{\p^2 \mathfrak{g}}{\p \beta_i \p \beta_j}  =  B \Jay D^2 H(0) -  D^2 H(0)\Jay B + \mathcal{O}(\varepsilon) \]
Since the eigenvalues of a matrix depend continuously on its entries, for sufficiently small $\varepsilon$, the Hessian of $\mathfrak{g}$ has the same signature as $B \Jay D^2 H(0) -  D^2 H(0)\Jay B$. Our goal then, is to determine the possible signatures of this matrix. As a first observation, a simple calculation tells us that the trace is zero, which rules out the possibility that the matrix could be sign-definite, in agreement with theorem \ref{mainthm} $(i)$. For a deeper investigation, we begin by defining the map
\begin{align*} \chi_A : \Sym (\real^{2n \times 2n}) & \ra \Sym (\real^{2n \times 2n}) \\
                                                      \chi_A (B) & =  B \Jay A - A \Jay B \end{align*}
Where $\Sym (\real^{2n \times 2n})$ denotes the symmetric $2n \times 2n$ matrices with real entries. We can now express the set of matrices that we are studying as $ \mathcal{R}(\chi_{D^2 H(0)|_{W^c}})$. To gain a characterisation of this range, we endow $ \Sym (\real^{2n \times 2n})$ with the inner product
\begin{equation} \left< M_1 , M_2 \right> = \tr(M_1 M_2) \label{traceinnerproduct} \end{equation}
that is, the inner product of $M_1$ and $M_2$ is the trace of their ordinary matrix product. This allows us to write $ \mathcal{R}(\chi_{D^2 H(0)|_{W^c}}) = \ker (\chi^{*}_{D^2 H(0)|_{W^c}})^{\perp}$, where both the adjoint and the orthogonal complement are taken with respect to (\ref{traceinnerproduct}). We calculate the adjoint as follows
\begin{align*} \tr(\chi_A (B) M)  = & \; \tr( (B\Jay A - A\Jay B)M) \\
                                 =  &  \;  \tr(B \Jay A M) - \tr(A\Jay B M)  \\
                                 =  &  \; \tr(B\Jay A M) - \tr(B M A \Jay)    \\
                                 =  & \; \tr(B(\Jay AM - MA\Jay))
\end{align*}
Hence,
\[ \chi^{*}_{A}(M) =  \Jay AM - MA\Jay\]

Recall that in our coordinates the second derivative of the Hamiltonian restricted to the center subspace takes the diagonal form
\[ D^2 H(0)|_{E^c}  =  \diag(\omega_1, \ldots , \omega_n, \omega_1, \ldots , \omega_n ) \]
We also use the notation $\diag(M)$, for $M \in \real^{2n\times 2n}$, to denote the vector which contains the diagonal elements of $M$.

\begin{lemma} \label{Rangeofchi} If $\omega^2_1 \neq \omega^2_2 \neq \ldots \neq \omega^2_n$, then
\begin{enumerate}[(i)]
\item $ \ker ( \chi^{*}_{D^2 H(0)} ) =  \left\{ \left. \diag(a_{1},\ldots,a_{n},a_{1},\ldots,a_{n}) \right| a_{i} \in \real \right\} $
\item  $  \mathcal{R}(\chi_{D^2 H(0)}) =  \left\{ \left. M \in   \Sym (\real^{2n \times 2n}) \right|  \diag(M) =  (g_1, \ldots , g_n, -g_1, \ldots , -g_n), \;\; g_i \in \real \right\}  $
\end{enumerate}

\end{lemma}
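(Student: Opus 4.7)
The plan is first to compute $\ker(\chi^*_{D^2 H(0)})$ by a direct block-matrix calculation exploiting the diagonal form of $A := D^2 H(0)$, and then to obtain (ii) from the orthogonality $\mathcal R(\chi_A)=\ker(\chi^*_A)^\perp$ with respect to the trace inner product (\ref{traceinnerproduct}), which applies because $\chi_A$ is an endomorphism of the finite-dimensional inner-product space $\Sym(\real^{2n\times 2n})$.

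For (i), I would write an arbitrary symmetric $M$ in block form $M=\begin{pmatrix}P & Q \\ Q^T & R\end{pmatrix}$ with $P,R$ symmetric $n\times n$, and set $A=\mathrm{diag}(\Omega,\Omega)$ with $\Omega=\mathrm{diag}(\omega_1,\ldots,\omega_n)$. Expanding $\chi^*_A(M)=\Jay A M-MA\Jay=0$ block by block yields the four conditions
\[
\Omega P=R\Omega,\qquad \Omega R=P\Omega,\qquad \Omega Q+Q^T\Omega=0,\qquad \Omega Q^T+Q\Omega=0.
\]
Combining the first two gives $\Omega^2 R=R\Omega^2$; since the $\omega_i^2$ are distinct, $\Omega^2$ has simple spectrum, and so $R$ (and similarly $P$) must be diagonal. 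Then $\Omega R=P\Omega$ reads entrywise $\omega_i R_{ii}=\omega_i P_{ii}$, which (using $\omega_i\neq 0$, an implicit consequence of Assumption \ref{specass}) gives $R_{ii}=P_{ii}=:a_i$. For $Q$ the remaining conditions become $\omega_i Q_{ji}+\omega_j Q_{ij}=0$ and $\omega_j Q_{ji}+\omega_i Q_{ij}=0$ for all $i,j$; setting $i=j$ immediately gives $Q_{ii}=0$, while for $i\neq j$ this is a $2\times 2$ system in $(Q_{ij},Q_{ji})$ of determinant $\omega_j^2-\omega_i^2\neq 0$, forcing $Q_{ij}=Q_{ji}=0$. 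Thus $Q=0$ and the kernel has the claimed form.

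For (ii), I would then identify $\mathcal R(\chi_A)$ as the orthogonal complement of the kernel computed above: $M\in\mathcal R(\chi_A)$ iff $\tr(MK)=0$ for every $K=\mathrm{diag}(a_1,\ldots,a_n,a_1,\ldots,a_n)\in\ker(\chi^*_A)$. A direct trace computation gives $\tr(MK)=\sum_{i=1}^n a_i(M_{ii}+M_{n+i,n+i})$, and this vanishes for all choices of $a_i$ precisely when $M_{n+i,n+i}=-M_{ii}$ for each $i$. Setting $g_i=M_{ii}$ then yields the stated description.

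The only genuinely non-trivial step is the invocation of the two non-degeneracy hypotheses ($\omega_i\neq 0$ and $\omega_i^2\neq\omega_j^2$ for $i\neq j$) to decouple $P$ from $R$ and to annihilate $Q$; everything else is bookkeeping. The main obstacle I anticipate is purely clerical, namely keeping the block-matrix indices straight across the four conditions and through the transpositions induced by the symmetry of $M$.
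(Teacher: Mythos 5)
Your proof is correct, and it reaches the same two ingredients the paper relies on (the distinctness of the $\omega_i^2$, the fact that $\omega_i\neq 0$, and the identification $\mathcal R(\chi_A)=\ker(\chi_A^*)^\perp$ for part (ii)), but your route through part (i) is organised differently. The paper proves (i) by induction on $n$: it writes $K$ with an $(i{+}1)$-st row and column appended to an $2i\times 2i$ core, computes $KA\Jay$ and $\Jay AK$ explicitly, invokes the induction hypothesis on the core, and then kills the new entries via $2\times 2$ systems with determinant $\omega_{i+1}^2-\omega_k^2$. You instead do a single $2\times 2$ block computation, reduce the diagonal blocks to the commutation relation $\Omega^2 R=R\Omega^2$, and conclude that $P$ and $R$ are diagonal because a matrix commuting with a diagonal matrix of distinct entries is diagonal; the off-diagonal block $Q$ is then annihilated by the same $2\times 2$ determinant argument the paper uses. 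Your version avoids the induction and the bulky explicit matrices, at the cost of invoking the standard commutant fact; both arguments use the nondegeneracy hypotheses in exactly the same places, and your treatment of (ii) by the trace computation coincides with the paper's (which simply asserts that (ii) ``follows easily'' from (i)).
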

\begin{proof}
\begin{enumerate}[(i)]
\item We use an induction argument on $n$. The statement is easily verified for $n=1$. Assuming the case $n=i$, we now consider $n=i+1$. We write $K \in \Sym (\real^{2(i+1)\times 2(i+1)})$ as
\[\hspace{-1cm} K =
\left( \begin{matrix} & &    k^1_{1, i+1} &  & &    k^2_{1, i+1}\\
& \vspace{2mm} \hspace{-2mm} K^1 &   \vdots & &  K^2 &   \vdots \\
 k^1_{i+1,1} & \ldots  & k^1_{i+1, i+1} &  k^2_{i+1,1} & \ldots  & k^2_{i+1, i+1} \\
 & &    k^2_{i+1,1} &  & &    k^3_{1, i+1}\\
& \vspace{2mm} \hspace{-2mm}  (K^2)^T &   \vdots & &  K^3 &   \vdots \\
 k^2_{1,i+1} & \ldots  & k^2_{i+1, i+1} &  k^3_{i+1,1} & \ldots  & k^3_{i+1, i+1}  \end{matrix}\right)
\]
with each $K^j$ being an $i \times i$ matrix, with $K^1$ and $K^3$ symmetric, and also writing $A^i$ for the matrix $\diag(\omega_1,\ldots,\omega_i)$, we arrive at
\[ K D^2H(0)|_{W^c} \Jay = \]
\[ \left( \begin{matrix} & &   -\omega_{i+1} k^2_{1, i+1} &  & &  \omega_{i+1}  k^1_{1, i+1}\\
& \vspace{2mm} \hspace{-2mm} - K^2 A^i &   \vdots & &  K^1 A^i &   \vdots \\
-\omega_1 k^2_{i+1,1} & \ldots  & -\omega_{i+1} k^2_{i+1, i+1} & \omega_1 k^1_{i+1,1} & \ldots  & \omega_{i+1} k^1_{i+1, i+1} \\
 & & -\omega_{i+1} k^3_{1,i+1} &  & &   \omega_{i+1} k^2_{i+1, i}\\
& \vspace{2mm} \hspace{-2mm} - K^3 A^i &   \vdots & &  (K^2)^T A^i &   \vdots \\
 -\omega_1 k^3_{1,i+1} & \ldots  & -\omega_{i+1} k^3_{i+1, i+1} &  \omega_1 k^2_{1,i+1} & \ldots  & \omega_{i+1} k^2_{i+1, i+1}  \end{matrix}\right) \]
and
\[ \hspace{-1cm}
\Jay D^2H(0)|_{W^c} K = \]
\[ \hspace{-4mm} \left( \begin{matrix} & &    \omega_1 k^2_{i+1, 1} &  & & \omega k^3_{1, i+1}\\
& \vspace{2mm} \hspace{-2mm} A^i (K^2)^T &   \vdots & &  A^i K^3 &   \vdots \\
 \omega_{i+1} k^2_{1,i+1} & \ldots  & \omega_{i+1}k^2_{i+1, i+1} &  \omega_{i+1} k^3_{1,i+1} & \ldots  & \omega_{i+1} k^3_{i+1, i+1} \\
 & &  -\omega_1  k^1_{1,i+1} &  & &  -\omega_1  k^2_{1, i+1}\\
& \vspace{2mm} \hspace{-2mm}  - A^i K^1 &   \vdots & & - A^i K^2 &   \vdots \\
-\omega_{i+1} k^1_{i+1,1} & \ldots  & -\omega_{i+1}k^1_{i+1, i+1} &  -\omega_{i+1}k^2_{i+1,1} & \ldots  & -\omega_{i+1} k^2_{i+1, i+1}  \end{matrix}\right) \]
Equating these matrices, we find that the block components are equal if and only if the matrix
\[ \left( \begin{matrix} K^1 & K^2 \\
(K^2)^T & K^3 \end{matrix} \right) \in \Sym(\real^{2i \times 2i}) \]
lies in the kernel for the $i$-dimensional case. By the induction hypothesis, this matrix thus has the form given in $(i)$. Equating the remaining components gives firstly
\begin{align*} -\omega_{i+1} k^2_{i+1,i+1} & = \omega_{i+1} k^2_{i+1,i+1}  & \Rightarrow   k^2_{i+1,i+1}  = 0 \\
\omega_{i+1} k^1_{i+1,i+1} & = \omega_{i+1} k^3_{i+1,i+1} & \Rightarrow  k^1_{i+1,i+1}  = k^3_{i+1,i+1} \end{align*}
Furthermore, we obtain a collection of pairs of simultaneous linear equations, one example being
\[ \left( \begin{matrix} w_{i+1} & -\omega_1 \\
                                   -\omega_1 & \omega_{i+1} \end{matrix} \right) \left( \begin{matrix} k^1_{1,i+1} \\ k^3_{1,i+1} \end{matrix} \right) = \left( \begin{matrix} 0 \\ 0 \end{matrix} \right) \]
If $\omega^2_{i+1} - \omega^2_1 \neq 0$, we thus obtain that $ k^1_{1,i+1} = k^3_{1,i+1} = 0$. Accounting for all components in a similar way tells us that provided $ \omega^2_{i+1} \neq \omega^2_{k}$ for $k \in \{ 1, \ldots , i\}$, we must have all other components equal to zero. Thus the only degree of freedom is in choosing the value of $ k^1_{i+1,i+1}  = k^3_{i+1,i+1}$, and so $K$ itself is of the form given in $(i)$. This concludes the induction step and thus the proof of $(i)$.

\item This follows easily from $(i)$, using the characterisation $ \mathcal{R}(\chi_{D^2 H(0)|_{W^c}}) = \ker (\chi^{*}_{D^2 H(0)|_{W^c}})^{\perp}$.
\end{enumerate}
\end{proof}

In this section we prove the following theorem:

\begin{thm}
The first order approximation to the Hessian
\[   D^2 H(0)\Jay B - B \Jay D^2 H(0) \]
can take every signature except $(2l,0)$ or $(0,2l)$. \label{anysignature}
\end{thm}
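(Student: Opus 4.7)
The plan is to combine the characterization of the range of $\chi_{D^2H(0)|_{E^c}}$ established in Lemma~\ref{Rangeofchi} with a classical realizability result (the theorem of Mirsky cited in \cite{Mirsky1958}, in the symmetric-matrix form commonly attributed to Schur-Horn) to construct examples achieving any prescribed indefinite signature.

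First I would dispatch the impossibility of the definite signatures $(2l,0)$ and $(0,2l)$: by Lemma~\ref{Rangeofchi}(ii), every matrix of the form $D^2H(0)\Jay B - B\Jay D^2H(0)$ has diagonal of the form $(g_1,\ldots,g_l,-g_1,\ldots,-g_l)$ and hence vanishing trace, so it cannot be positive or negative definite.

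For the realization direction, fix an indefinite signature $(p,2l-p)$ with $1\le p\le 2l-1$. I would reduce to the simplest case of zero diagonal by choosing $g_1=\cdots=g_l=0$, so the task becomes: produce a real symmetric $2l\times 2l$ matrix with zero diagonal and spectrum of signature $(p,2l-p)$. By Mirsky's realizability theorem, such a matrix exists whenever the prescribed spectrum majorizes the diagonal vector; for the zero diagonal this reduces to requiring that the eigenvalues sum to zero and that, when arranged in decreasing order, all partial sums are nonnegative. An explicit admissible spectrum is obtained by taking the value $2l-p$ with multiplicity $p$ and the value $-p$ with multiplicity $2l-p$. The total sum is $p(2l-p)-(2l-p)p=0$, the partial sums equal $k(2l-p)\ge 0$ for $k\le p$ and $p(2l-k)\ge 0$ for $p\le k\le 2l$, and the signature is $(p,2l-p)$ as required. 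Mirsky's theorem then produces a symmetric matrix $M$ with this spectrum and zero diagonal, and Lemma~\ref{Rangeofchi}(ii) guarantees $M\in\mathcal{R}(\chi_{D^2H(0)|_{E^c}})$, i.e. $M=D^2H(0)\Jay B-B\Jay D^2H(0)$ for some symmetric $B$.

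The main obstacle in this program is identifying the appropriate realizability theorem in the symmetric-matrix setting and verifying that the single tracelessness constraint from Lemma~\ref{Rangeofchi} exhausts all obstructions. Once this is granted, no further symplectic constraint arises: the range characterization reduces the problem to a purely linear-algebraic question about diagonals and spectra of symmetric matrices, which the Mirsky/Schur-Horn machinery settles by the explicit majorization check above.
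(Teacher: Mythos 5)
Your proposal is correct and rests on exactly the same two pillars as the paper's proof: the range characterization of Lemma \ref{Rangeofchi}(ii) (which reduces the problem to prescribing the diagonal and spectrum of a symmetric matrix) together with Mirsky's realizability theorem. The only difference is in the explicit instantiation, and yours is cleaner: the paper fixes the diagonal $g=(1,\ldots,1,-1,\ldots,-1)$ and then has to verify the majorization $g\prec b$ through a six-case analysis on the partial sums, whereas your choice of the zero diagonal (permitted by the lemma, since $g_1=\cdots=g_l=0$ is an admissible diagonal pattern) reduces the majorization condition to the single requirement that the decreasingly ordered partial sums of the spectrum be nonnegative with total sum zero, which your two-valued spectrum ($2l-p$ with multiplicity $p$, $-p$ with multiplicity $2l-p$) satisfies by inspection. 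The impossibility of the definite signatures via tracelessness is also the paper's argument. In short: same method, with a choice of parameters that eliminates the case analysis.
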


The proof is based upon an application of a theorem from \cite{Mirsky1958}. Before stating the theorem we introduce some notation

\begin{defn}
For two vectors $(a_1, \ldots , a_n)$ and $(b_1, \ldots , b_n)$ in $\real^n$, the expression
\[ (a_1, \ldots , a_n) \prec (b_1, \ldots , b_n) \]
will mean that when the elements are renumbered so that
\[ a_1 \geq \ldots \geq a_n, \mbox{  and  } b_1 \geq \ldots \geq b_n, \]
then
\begin{align}
a_1 + \ldots + a_k & \leq b_1 + \ldots + b_k \mbox{     } (k = 1, \ldots \ n-1)  \label{partialsums}\\
a_1 + \ldots + a_n & = b_1 + \ldots + b_n.
\end{align}
\end{defn}

\noindent{\bf Mirsky Theorem} \cite{Mirsky1958}.{\em
Let $ \omega_1, \ldots , \omega_n$, $a_1, \ldots , a_n$ be real numbers. Then
\[ (a_1, \ldots , a_n) \prec (\omega_1, \ldots , \omega_n ) \label{orderrelation} \]
is the necessary and sufficient condition for the existence of a real symmetric $n \times n$ matrix with $ \omega_1, \ldots , \omega_n$
as its eigenvalues and  $a_1, \ldots , a_n$, in that order, as its diagonal elements.}\\

We now use this criterion to prove theorem \ref{anysignature}. The idea of the proof will be to demonstrate that taking the vector $g$ given by
\[ (g_1,\ldots , g_l, g_{l+1}, \ldots , g_{2l}) = (1,\ldots ,1, -1, \ldots , -1), \]
and any $m \in \{1,\ldots, 2l -1 \}$, we can demonstrate a vector $b \in \real^{2l}$ with $m$ positive and $(2l - m)$ negative elements, satisfying
\[ (g_1, \ldots , g_{2l}) \prec (b_1, \ldots , b_{2l}). \]
Appealing to Mirsky theorem will then provide us with a matrix in $ G \in \Sym (\real^{2l \times 2l})$ whose diagonal elements are given by $g$ (and hence $G \in \mathcal{R}(\chi_{D^2 H(0)})$), whose eigenvalues are $b_1, \ldots , b_{2l}$, and hence $G$ has signature $(m, 2l -m)$.

\begin{proof}[Proof of theorem \ref{anysignature}]
Choose  any $m \in \{1,\ldots , 2l-1 \}$, and write
\[ b = \Big( \underbrace{2l -m, 1, \ldots, 1}_{m \mbox{ elements}},\underbrace{\frac{-(2l-1)}{(2l-m)},\frac{-(2l-1)}{(2l-m)}, \ldots , \frac{-(2l-1)}{(2l-m)}}_{(2l-m) \mbox{ elements}} \Big) \]

As explained above, the theorem will be proved if we can demonstrate that $g \prec b$ (with $g$ as defined above). Firstly, we note that the elements of $g$ and $b$ are already numbered in the appropriate nonincreasing order, and that
\[ g_1 + \ldots + g_{2l}  = b_1 + \ldots + b_{2l} = 0. \]
To prove that (\ref{partialsums}) is satisfied, we consider the cases $m>l$ and $m\leq l$ separately.

\noindent \underline{Case (1a): $m>l$, $k \in \{1,\ldots,l\}$}.  For $k$ in this range, the inequalities in (\ref{partialsums}) take the form
\begin{align*}   k \; \leq  & \; (2l-m) + (k-1) \\
           \Leftrightarrow  0 \; \leq & \; 2l - m -1 \end{align*}
which is true since $m \in \{1, \ldots , 2l -1 \}$.

\noindent \underline{Case (1b): $m>l$, $k \in \{l+1, \ldots, m \}$ } Here (\ref{partialsums}) becomes
\[ 2l - k \; \leq  \; (2l-m) + (k-1) \]
so
\[ -k \; \leq  \;  (k-1) - m \]
and since $l+1 \leq k \leq m$, this means
\[ l+1 -m -1 \leq k - m - 1 \]
so we need $l-m \geq -k $. But $ m \leq (2l-1)$ so
\begin{align*}
l-m \geq & \;  l - (2l-1) \\
        \geq      & - l -1 \\
        \geq      &  - k.
 \end{align*}
\underline{Case (1c): $m>l$, $k \in \{ m+1, \ldots , 2l \}$ }. We now have
\[ 2l - k \leq (2l -1) - (k - m)\frac{(2l -1)}{(2l - m)} \]
and since $(2l-m) > 0$ this simplifies to
\[ km \leq k + 2l(m-1). \]
Assuming for contradiction that $km > k + 2l(m-1)$ leads to
\[ m -1 > \frac{2l}{k}(m-1) \]
but since $\frac{2l}{k} \geq 1$, this is our required contradiction.

\noindent \underline{Case (2a): $m\leq l$, $k\in \{1,\ldots, m \}$ } This is the same as case (1a).

\noindent \underline{Case (2b): $m \leq l$, $k \in \{m,\ldots , l \}$} We now need to show
\[ k \leq (2l -1) - (k-m)\frac{(2l-1)}{(2l-m)}. \]
This simplifies to
\[ \frac{k}{2l}(4l - m -1) \leq (2l-1) \]
and since $\frac{k}{2l} \leq \frac{1}{2}$ and $(4l -m -1) \leq (4l - 2)$, this is true.


\noindent \underline{Case (2c): $ m\leq l$, $ k \in \{l+1, \ldots , 2l \}$} This is the same as case (1c).
\end{proof}

This result finishes the proof of item (iii) of Theorem 2.

\section{Reversible Hamiltonian case} \label{specialcases}

Let us now assume further that our Hamiltonian system is reversible with respect to a linear involution which acts antisymplectically $ R: \real^{2n} \ra \real^{2n}$, and also that the homoclinic to the equilibrium $\gamma(t)$ is \emph{symmetric}, as described by assumption \ref{Reversibility}. This implies that $R$ and $DX_H(0)$ share the same invariant subspaces, and in particular the restriction of $\Jay DX_H(0)$ to the center subspace $E^c$ is reversible with the respect to the restriction of $R$ to $E^c$. By a symplectic change of coordinates in $E^c$ which amounts to averaging the inner product over the finite group generated by $R$ and $\Jay$, we are able to assume without loss of generality that $\Jay$ takes its standard form $\Jay = \left( \begin{smallmatrix} 0 & I_l \\ -I_l & 0 \end{smallmatrix} \right)$ and $R$ is orthogonal (see for instance appendix B of \cite{Hoveijn2003}). Since $R^2 = I$, this means that $R$ is symmetric. In what follows we sometimes write $R$ for the restriction of $R$ to $E^c$, when the context is clear.

In this section we prove Theorem \ref{specialcasesthm}. First, we assemble some properties of the scattering matrix and the Hessian.

\begin{lemma}  Under assumption \ref{Reversibility},
\begin{enumerate}[(i)]
\item The scattering matrix $\sigma$ satisfies $\sigma \circ R \circ \sigma = R \label{Sigmarev}$.
\item $ D^2\mathfrak{g} \circ (R \circ \sigma) = -(R\circ \sigma)^T \circ D^2\mathfrak{g} . \label{Galmostrev} $
\end{enumerate}
\end{lemma}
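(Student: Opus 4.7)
The approach is to derive (i) from reversibility applied to the linearized flow and the asymptotic characterization of $\sigma$, and then to obtain (ii) as a purely algebraic consequence of (i) together with the symmetry $RD^2H(0)|_{E^c} = D^2H(0)|_{E^c}R$.

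For part (i), I would first record two elementary consequences of Assumption \ref{Reversibility}. The relation $X_H(Ru) = -RX_H(u)$ combined with $R\Jay = -\Jay R$ forces $H\circ R = H$ up to an additive constant, whence $R$ commutes with $D^2H(0)$. Consequently $\Jay D^2H(0)|_{E^c}\, R = -R\,\Jay D^2H(0)|_{E^c}$, and the flow $\Psi$ of the linear system on the center subspace obeys
\[
R\,\Psi(t) = \Psi(-t)\,R.
\]
Secondly, since $\Gamma$ is $R$-symmetric we may reparametrize so that $R\gamma(t) = \gamma(-t)$, and a direct computation then shows that whenever $x(t)$ solves the variational equation so does $Rx(-t)$. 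Hence the fundamental matrix satisfies $R\,\Phi(-t,-s) = \Phi(t,s)\,R$, and the same holds for its restriction $\Phi^c$ to the central family $Y(\cdot)$.

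Having these two functional equations in hand, I would pass to the intrinsic description of $\sigma$: for every $v^-\in E^c$ there exists a unique solution $k(t)\in Y(t)$ of the variational equation with $\lim_{t\to -\infty}\Psi(-t)k(t) = v^-$, and then $\sigma v^- = \lim_{t\to+\infty}\Psi(-t)k(t)=:v^+$. Applying the reversal to $k$ produces a new solution $\tilde k(t):=Rk(-t)$ still lying in $Y(\cdot)$, whose asymptotic limits at $-\infty$ and $+\infty$ are, by the two commutation identities above, respectively $Rv^+$ and $Rv^-$. By the definition of $\sigma$ this says $\sigma(Rv^+) = Rv^-$, and since $v^-$ (hence $v^+$) was arbitrary this yields $\sigma R\sigma = R$, which is (i).

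Part (ii) is then a purely algebraic calculation. Writing $H := D^2H(0)|_{E^c}$ and $G := D^2\mathfrak{g} = \sigma^T H\sigma - H$, and using $R^T=R$, the claim $G\,(R\sigma) = -(R\sigma)^T G$ is equivalent to $GR\sigma + \sigma^T R G = 0$. Transposing (i) also gives $\sigma^T R\sigma^T = R$. Substituting $\sigma R\sigma = R$, $\sigma^T R\sigma^T = R$ and $HR=RH$ into the four terms obtained by expanding $GR\sigma + \sigma^T R G$ makes them cancel in pairs. The main obstacle is thus really (i): one must be careful with the intrinsic meaning of the formula $\sigma = \lim_{t\to\infty}\Psi(-t)\Phi^c(t,-t)\Psi(-t)$ and verify that the reversal $k\mapsto Rk(-\cdot)$ truly interchanges (rather than merely conjugates) the two asymptotic limits that define $\sigma$. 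Once the two commutation identities are secured, the rest of the argument is formal.
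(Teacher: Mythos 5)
Your proposal is correct and follows essentially the same route as the paper: part (i) rests on the same three commutation identities ($R\Psi(t)=\Psi(-t)R$, $R\Phi(-t,-s)=\Phi(t,s)R$, and $RY(t)=Y(-t)$), which the paper combines directly in the limit formula for $\sigma$ to get $R\sigma=\sigma^{-1}R$ while you track individual asymptotic solutions, and part (ii) is the identical algebraic cancellation using $\sigma R\sigma=R$ and $RD^2H(0)|_{E^c}=D^2H(0)|_{E^c}R$. No gaps.
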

\begin{proof} (i.) The scattering matrix is defined as $ \lim_{t\ra \infty} \Psi(-t)\Phi^c(t, -t)\Psi(-t) $.  As a consequence of assumption \ref{Reversibility} we have $R\Phi(t,-t) = \Phi(-t,t) R$ and since the dynamics in the centre subspace of the equilibrium are reversible, we also have $R\Psi(-t) = \Psi(t) R$. Furthermore, the family of subspaces $Y^c(t) \subset T_{\gamma(t)}\real^{2n}$ satisfy $RY^c(t) = Y^c(-t)$ which leads to $ P^c (t) = RP^c(-t)R$ where $P^c(t)$ is our projection onto $Y^c(t)$. Combining these relations and applying them to the definition of $\sigma$ yields $R\circ\sigma = \sigma^{-1} \circ R$ and hence the result.
Regarding part (iii), we already have the expression $-D^2\mathfrak{g} =  D^2H(0)|_{E^c} - \sigma^T (D^2H(0)|_{E^c})\sigma$, so that
\[ -D^2\mathfrak{g} \circ (R \circ \sigma) =   (D^2H(0)|_{E^c}) R \sigma - \sigma^T  (D^2H(0)|_{E^c}) \sigma R \sigma. \]
Since the linearisation $\Jay D^2 H(0)|_{E^c}$ is reversible, and since $R$ acts antisymplectically, this implies that $D^2H(0)$ commutes with $R$. Using this fact and (\ref{Sigmarev}) brings us to
\[ -D^2\mathfrak{g} \circ (R \circ \sigma) =  R(D^2H(0)|_{E^c}) \sigma - \sigma^T R (D^2H(0)|_{E^c}). \]
The claim now follows using $R^T = R$ and (\ref{Sigmarev}) again.
\end{proof}

The idea in what follows
is to choose a basis of $\ker(L)$ in which $D^2\mathfrak{g}$ becomes $R\circ \sigma$ reversible, thus implying a symmetry of the spectrum,
which gives the $(l,l)$ signature. Looking at (\ref{Galmostrev}), we see that $D^2\mathfrak{g}$ is $(R \circ \sigma)$ reversible if $(R \circ \sigma)$ is symmetric. Since $(R\circ \sigma)$ is an involution, this is the same as being orthogonal.

\begin{proof}[Proof of theorem \ref{specialcasesthm}]
Define a new inner product by
\begin{align*} [ x, y ]  = & \frac{1}{2}\left( \left< x,y \right> + \left< (R\circ \sigma) x, (R\circ \sigma) y \right> \right) \\
                   = & \left< \frac{1}{2} (I + (R \circ \sigma)^T (R \circ \sigma))x, y \right>. \end{align*}
Note that
\begin{equation} [ (R \circ \sigma)x, (R \circ \sigma)y ] = [ x, y ]. \label{invariance} \end{equation}
Since $\frac{1}{2}(I + (R \circ \sigma)^T(R \circ \sigma))$ is symmetric and positive definite, it has a uniquely defined symmetric
square root so we can write
\[ \frac{1}{2}(I + (R \circ \sigma)^T(R \circ \sigma)) = S^T S \]
and hence
\[ [x,y] = \left< Sx, Sy \right> \]
So, the new inner product is just the old one but in the new basis given by applying $S$ to the old basis. Looking at
(\ref{invariance}) tells us that in this basis, $R\circ \sigma$ is an isometry, and hence represented by an orthogonal
matrix. So, in this basis we have the relation
\[ D^2\mathfrak{g} \circ (R \circ \sigma) = - (R \circ \sigma) \circ D^2\mathfrak{g} \]
which is what we wanted, and so the signature of $D^2\mathfrak{g}$ must be $(l,l)$, since $D^2\mathfrak{g}$ is related to $-D^2\mathfrak{g}$ by a similarity transform.
\end{proof}

\section*{Acknowledments} This work was supported by the RSF grant 14-41-00044 at the Lobachevsky University of Nizhny Novgorod.
\thebibliography{99}
\bibitem{Battelli1990} Flaviano Battelli and Claudio Lazzari. Exponential dichotomies, heteroclinic
orbits, and Melnikov functions. Journal of Differential Equations,
86(2):342–366, August 1990.
\bibitem{Blazquez-Sanz} David Bl´azquez-Sanz and Kazuyuki Yagasaki. Analytic and algebraic conditions
for bifurcations of homoclinic orbits I : Saddle equilibria. Journal
of Differential Equations, 253(11):2916–2950, 2012.
\bibitem{Champneys2000} Alan R. Champneys and J¨org H¨arterich. Cascades of homoclinic orbits
to a saddle-centre for reversible and perturbed Hamiltonian systems. Dynamics
and Stability of Systems, 15(3):231–252, September 2000.
\bibitem{CHbook} R. Courant and D. Hilbert, Methods of Mathematical Physics (John Wiley
\& Sons, 1989).
\bibitem{Delshams2010} A. Delshams, P. Guti´errez, O. Koltsova, and J. R. Pacha. Transverse
intersections between invariant manifolds of doubly hyperbolic invariant
tori, via the Poincar´e-Melnikov method. Regular and Chaotic Dynamics,
15(2-3):222–236, April 2010.
\bibitem{Gosson2006} Maurice A De Gosson. Symplectic Geometry and Quantum Mechanics.
Birkhauser Verlag, 2006.
\bibitem{Grotta-Ragazzo1997} Clodoaldo Grotta-Ragazzo. Irregular dynamics and homoclinic orbits to
Hamiltonian saddle centers. Communications on Pure and Applied Mathematics,
50(2):105–147, February 1997.
\bibitem{Gruendler1992} J Gruendler. Homoclinic solutions for autonomous dynamical systems in
arbitrary dimension. SIAM journal on mathematical analysis, 23(3):702–
721, 1992.
\bibitem{Hoveijn2003} I Hoveijn, J S W Lamb, and R M Roberts. Normal forms and unfoldings
of linear systems in eigenspaces of (anti)-automorphisms of order two.
Journal of Differential Equations, 190:182–213, 2003.
\bibitem{Knobloch1995} J. Knobloch and U Schalk. Homoclinic points near degenerate homoclinics.
Nonlinearity, 8:1133–1141, 1995.
\bibitem{Koltsova1995} O Koltsova and L.M. Lerman. Periodic and homoclinic orbits in a twoparameter
unfolding of a Hamiltonian system with a homoclinic orbit to
a saddle-center. International Journal of Bifurcation and Chaos, 05(02),
1995.
\bibitem{Lerman1996} O. Koltsova and L.M. Lerman. Families of transverse Poincar´e homoclinic
orbits in 2N-dimensional Hamiltonian systems close to the system with a
loop to a saddle-center. International Journal of Bifurcation and Chaos,
6(6):991–1006, 1996.
\bibitem{Koltsova2005} Oksana Koltsova, L.M. Lerman, A. Delshams, and Pere Guti´errez. Homoclinic
orbits to invariant tori near a homoclinic orbit to center-centersaddle
equilibrium. Physica D: Nonlinear Phenomena, 201(3-4):268–290,
February 2005.
\bibitem{Lerman1991} L.M. Lerman. Hamiltonian Systems with Loops of a Separatrix of
a Saddle-Center. Selecta Mathematica Sovietica (English translation),
10(3):297–306, 1991.
\bibitem{Mcduff1998} Dusa Mcduff and Dietmar Salamon. Introduction to Symplectic Topology.
Oxford University Press, 1998.
\bibitem{Mielke1991} A Mielke. Hamiltonian and Lagrangian flows on Center Manifolds.
Springer-Verlag, 1991.
\bibitem{Mielke1992} A Mielke, P Holmes, and O O’Reilly. Cascades of Homoclinic Orbits to,
and Chaos Near, a Hamiltonian Saddle-Center. Journal of Dynamics and
Differential Equations, 4(1):95–126, 1992.
\bibitem{Mirsky1958} L. Mirsky. Matrices with prescribed characteristic roots and diagonal
elements. Journal of the London Mathematical Society, 33:14–21, 1958.
\bibitem{Palmer1984} Kenneth J Palmer. Exponential Dichotomies and Transversal Homoclinic
Points. Journal of Differential Equations, 256:225–256, 1984.
\bibitem{book} L.P. Shilnikov, A.L. Shilnikov, D.Turaev, L. Chua, Methods of qualitative
theory in nonlinear dynamics. I (World Scientific, 1998).
\bibitem{Wiggins1988} S Wiggins. Global bifurcations and chaos - Analytical Methods. SpringerVerlag,
1988.
\bibitem{Yagasakia} Kazuyuki Yagasaki. The method of Melnikov for perturbations of multidegree-of-freedom
Hamiltonian systems. Nonlinearity, 12:799–822, 1999.
\bibitem{Yagasaki2000} Kazuyuki Yagasaki. Horseshoes in Two-Degree-of-Freedom Hamiltonian
Systems with Saddle-Centers. Archive for Rational Mechanics and Analysis,
154(4):275–296, October 2000.

\bibitem{Yagasaki}Kazuyuki Yagasaki. Homoclinic and heteroclinic orbits to invariant
tori in multi-degree-of-freedom Hamiltonian systems with. Nonlinearity,
18:1331–1350, 2005.

\endthebibliography

\end{document}